\documentclass{scrartcl}
\usepackage[utf8]{inputenc}
\usepackage[affil-it]{authblk} 
%\pdfoutput=1

\usepackage{amsmath,amsthm,amsfonts,amssymb,amscd}
\usepackage{graphicx,bm,color}
\usepackage{algorithm}
\usepackage{subcaption}
\usepackage[T1]{fontenc}
\usepackage{algpseudocode}
\usepackage{stmaryrd}
\usepackage{listings}
\usepackage{lineno}
\usepackage{mathptmx}
\usepackage{cleveref}
\usepackage[toc,page]{appendix}
\usepackage[skins,theorems]{tcolorbox}
\usepackage{authblk}

\usetikzlibrary{arrows,calc, decorations.markings, intersections}
\usepackage{caption}

\usepackage{cancel}
\usepackage{mathtools}
\usepackage{url}
 
\textheight 8.5in
\textwidth 6in
\oddsidemargin .25in
\topmargin -1cm

% Definitions:

\def\keyword{\vspace{.5em}{\textbf{Keywords}.\,\relax}}

\definecolor{gray}{gray}{0.6}

\numberwithin{equation}{section}
% THEOREMS
\theoremstyle{plain}
\newtheorem*{theorem*}{Theorem}
\newtheorem{theorem}{Theorem}
\numberwithin{theorem}{section}
\newtheorem{proposition}[theorem]{Proposition}
\newtheorem{lemma}[theorem]{Lemma}

\theoremstyle{definition}
\newtheorem{definition}[theorem]{Definition}

\newcommand{\vertiii}[1]{{\left\vert\kern-0.25ex\left\vert\kern-0.25ex\left\vert #1 
		\right\vert\kern-0.25ex\right\vert\kern-0.25ex\right\vert}}

\renewcommand{\thefootnote}{\fnsymbol{footnote}}

\title{Multivariate Super-Resolution without Separation}
\author{Bakytzhan Kurmanbek}
\author{Elina Robeva}
\date{}
\affil{Department of Mathematics, The University of British Columbia, 1984 Mathematics Rd, Vancouver, BC V6T 1Z2}
%\vspace{-8ex}

\renewcommand{\cite}[1]{[#1]}
\def\beginrefs{\begin{list}%
        {[\arabic{equation}]}{\usecounter{equation}
         \setlength{\leftmargin}{2.0truecm}\setlength{\labelsep}{0.4truecm}%
         \setlength{\labelwidth}{1.6truecm}}}
\def\endrefs{\end{list}}

%Use this command for a figure; it puts a figure in wherever you want it.
%usage: \fig{NUMBER}{SPACE-IN-INCHES}{CAPTION}

% Use these for theorems, lemmas, proofs, etc.

\usepackage{apacite}

\usepackage{xcolor}

\newcommand\blfootnote[1]{%
  \begingroup
  \renewcommand\thefootnote{}\footnote{#1}%
  \addtocounter{footnote}{-1}%
  \endgroup
}

\makeatletter
\def\blfootnote{\gdef\@thefnmark{}\@footnotetext}
\makeatother

\begin{document}
\maketitle
\begin{abstract}In this paper we study the high-dimensional super-resolution imaging problem. Here we are given an image of a number of point sources of light whose locations and intensities are unknown. The image is pixelized and is blurred by a known point-spread function arising from the imaging device. We encode the unknown point sources and their intensities via a nonnegative measure and we propose a convex optimization program to find it.  Assuming the device's point-spread function is componentwise decomposable, we show that the optimal solution is the true measure in the noiseless case, and it approximates the true measure well in the noisy case with respect to the generalized Wasserstein distance. Our main assumption is that the components of the point-spread function form a Tchebychev system ($T$-system) in the noiseless case and a $T^*$-system in the noisy case, mild conditions that are satisfied by Gaussian point-spread functions. Our work is a generalization to all dimensions of the work~\cite{eftekhari2021stable} where the same analysis is carried out in 2 dimensions. We resolve an open problem posed in~\cite{schiebinger2018superresolution} in the case when the point-spread function decomposes.
%We show that this program finds the exact measure if there is no noise
    %We consider a high-dimensional superresolution problem, where one wants to localize the actual point sources of an image by solving a convex optimization problem, which is grid-free and has no other regularity other than the non-negativity of an image measure. Assuming the device's point spread function is componentwise decomposable, the optimization problem finds the nonnegative Borel measure that agrees with the observations at sample locations. The program finds exact measures in noiseless cases if componentwise functions form a Tchebychev system ($T$-system) and have enough observations. Our work is a generalization of work in 2-D \cite{eftekhari2021stable}; generic versions of these results are proven to higher dimensions. We approximate the image, in noisy cases, with well-separated atomic measures with finite point sources. The stability of the approximated solution is shown as regards the Generalized-Wasserstein distance. For this purpose, we introduce the $T^*$-systems and use the smoothness property of the imaging apparatus. 
\end{abstract}
%\subjclass[2020]{65K10, 15A30}
\blfootnote{\textup{2020} \textit{Mathematics Subject Classification}:
65K10, 15A30}
\keyword{Super-resolution, Tchebychev systems, Generalized-Wasserstein distance, Exact solutions, Bounds for a recovery error}

\section{Introduction}

In the super-resolution  imaging problem we are given the output of an imaging device depicting often very small or very distant objects. As a result, we observe a pixelized, blurred, and noisy image, and we aim to recover the true picture by removing the blur and increasing the resolution. Solving this problem is essential in many applied sciences such as neuroscience~\cite{betzig2006imaging, hess2006ultra, rust2006sub, ekanadham2011blind, evanko2009primer, tur2011innovation}, geophysics~\cite{khaidukov2004diffraction}, and astronomy~\cite{puschmann2005super}. Knowing the blurring (point-spread) function is usually crucial in achieving accurate results. 

Due to the complexity of the super-resolution imaging problem it is difficult to distinguish point sources of light that are very close to one another. As a result many of the existing methods can be proved to work only under a minimum separation condition between the point sources of light even as the amount of noise approaches zero~\cite{bendory2016robust, candes2014towards, morgenstern2015stable}.

It was first shown in the 1-dimensional case~\cite{schiebinger2018superresolution} that if no noise is present, then no minimum separation condition is required provided that the point-spread function satisfies a Tchebychev system~\cite{karlin1966tchebycheff} condition. This work was simplified and extended to the noisy case in~\cite{eftekhari2021sparse}, and then to the 2-dimensional and possibly noisy case in~\cite{eftekhari2021stable}. In the present paper we generalize these results to the case of any dimension both with or without noise.

%It is well-known that the imaging apparatus often produces unclear, blurred, and noisy images. When the image is dense with point sources close to each other, it is usually difficult to distinguish the microscopically small point sources. Moreover, finding the number of sources and their amplitudes is a challenging problem in modern image resolution programs and algorithms. Recently, it has been called a super-resolution problem \cite{eftekhari2021stable, eftekhari2021sparse, schiebinger2018superresolution}, which is a recovery of the actual image as precisely as possible. 

%Solving a super-resolution problem is essential in many applied science, like neuroscience \cite{betzig2006imaging, hess2006ultra, rust2006sub, ekanadham2011blind, evanko2009primer, tur2011innovation}, biology, geophysics \cite{khaidukov2004diffraction}, and astronomy \cite{puschmann2005super}. Knowing the blurring functions is crucial in achieving more accurate estimations in these papers. 

A lot of the theoretical work on super-resolution imaging uses concepts and techniques from compressed sensing~\cite{donoho2006compressed, candes2006robust, candes2005decoding}, by minimizing total variation over measures~\cite{candes2014towards, tang2013compressed, fyhn2013spectral, demanet2013super, duval2015exact, denoyelle2017support, azais2015spike, bendory2016robust}. In this manuscript, assuming that we have $K$ source points in the exact image, we show that $(2K+1)^d$ observations are enough to recover the image. In the noiseless case, we recover the image exactly, and in the noisy case, we provide an error bound in Generalized-Wasserstein distance.

\subsection*{Mathematical setup}
Let $\mu$ be a Borel measure supported on $\mathbb{I}^{d} = [0, 1]^{d}$. This measure will represent the true image that we are trying to reconstruct. Given an image represented by the measure $\mu$, an imaging device with (a known) point-spread function $\Psi:\mathbb R^d \to \mathbb R$ convolves each individual point source of light with the point-spread function $\Psi$ resulting in a new image which, at a point $(x_1,\ldots, x_d)$ equals (up to a noise term)
$$\int_{\mathbb I^d}\Psi(x_1-t_1,\ldots, x_d-t_d)\mu[\text{d}(t_1,\ldots, t_d)].$$

We will later assume that $\mu$ is a {\em sparse} measure, i.e., $\mu = \sum_{k=1}^Ka_k\delta_{\theta_k}$, where $\theta_k = (t_1^{(k)}, \ldots, t_d^{(k)})\in\mathbb R^d$ are the point source locations, and $a_k > 0$ are their intensities. In this case, the observed image at a point $(x_1,\ldots, x_d)$ equals
$$\sum_{k=1}^Ka_k\Psi(x_1-t_1^{(k)},\ldots, x_d-t_d^{(k)}).$$

We will also assume that the point-spread function $\Psi$ lies in the {\em tensor product model}, i.e., 
\begin{equation}\label{eq::3}
    \Psi(\xi_1, \cdots, \xi_d) = \psi^{(1)}(\xi_1)\cdots\psi^{(d)}(\xi_d),
\end{equation}
where $\psi^{(1)},\ldots, \psi^{(d)}$ are continuous real-valued functions on $\mathbb I$. 

% let's denote $\xi_i = \theta_{ki}-x_i$ for each $i \in [d]$:
% \begin{equation}\label{eq::3}
%     \Psi_{i_1, \cdots, i_d} (\xi_1, \cdots, \xi_d) = \psi_{i_1}(\xi_1) \cdots \psi_{i_d}(\xi_d),
% \end{equation}
% where  $\{\psi_{m}\}_{m=1}^{M}$ are real-valued and continuous functions on $\mathbb I$.
This is a widely used model in imaging as noted in~\cite{eftekhari2021stable} which studies the 2-dimensional case and assumes the 2-dimensional point-spread function satisfies~\eqref{eq::3}. It is also often the case that the functions $\psi^{(j)}$ are copies of a function $\psi$.

Suppose that the image we observe is of size $M\times \cdots \times M$ ($d$ times) with coordinates $\{y_{i_1, \cdots, i_d}\}_{i_j = 1}^{M}$ measured on an $M\times \cdots\times M$ ($d$ times) grid $\{x_1^{(1)},\ldots, x_M^{(1)}\}\times\cdots \times \{x_1^{(d)}, \ldots, x_M^{(d)}\}$, which can be thought of as consisting of the pixels. Then, 
%, and let $\{\psi_{m}\}_{m=1}^{M}$ be real-valued and continuous functions on $\mathbb I$. The (possibly noisy) observations $\{y_{i_1, \cdots, i_d}\}_{i_j = 1}^{M}$ collected from $\mu$ are given by
\begin{equation}\label{eq::1}
    y_{i_1, \cdots, i_d} \approx \int_{\mathbb{I}^{d}} \psi^{(1)}(x_{i_1}^{(1)} - t_1) \cdots \psi^{(d)}(x_{i_d}^{(d)} - t_d) \mu [\text{d}(t_1, \cdots, t_d)],
\end{equation}
where we have noted the possibility of noise being present.  For notational convenience, we denote $$\psi^{(i)}_{m} (t_i) = \psi^{(i)}(x^{(i)}_{m_{i}} - t_{i})$$
for every $i \in [d]$ and $m \in [M]$. 
More specifically, we will assume that 
\begin{equation}\label{eq::2}
    \sum_{i_1, \cdots, i_d = 1}^{M} \Big| y_{i_1, \cdots, i_d} - \int_{\mathbb{I}^{d}} \psi^{(1)}_{i_1}(t_1) \cdots \psi^{(d)}_{i_d}(t_d) \mu [\text{d}(t_1, \cdots, t_d)] \Big|^{2} \leq \delta^{2}
\end{equation}
where $\delta \geq 0$ reflects an additive noise model. 

%Often the functions $\psi_i$ are translated copies of a function $\psi$, and $y$ is the $d$-dimensional acquired signal that can be thought of as an image of $M^{d}$ pixels. We note that the \textit{tensor product model}, where the point spread function 
% \begin{equation}\label{eq::3}
%     \Psi_{i_1, \cdots, i_d} (t_1, \cdots, t_d) = \psi_{i_1}(t_1) \cdots \psi_{i_d}(t_d)
% \end{equation}
%is widely used as a model in imaging. 
We may rewrite (\ref{eq::2}) more compactly as 
\begin{equation}\label{eq::4}
\Big\| y - \int_{\mathbb{I}^{d}} \psi^{(1)}_{[M]}(t_1)\otimes\cdots\otimes\psi^{(d)}_{[M]}(t_d) \mu [\text{d}(t_1, \cdots, t_d)] \Big\|_{F} \leq \delta
\end{equation}
where $\|\cdot\|_{F}$ stands for the Frobenious norm (of a tensor), and $\psi^{(j)}_{[M]}(t_j) = (\psi^{(j)}_1(t_j), \ldots, \psi^{(j)}_M(t_j))^T$.

\subsection*{Summary of results}
To recover $\mu$, we propose to use the convex {{feasibility program}}

    \begin{center}
    \textit{Find a nonnegative Borel measure $\mu$ on $\mathbb{I}^{d}$ such that
    \begin{equation}\label{eq::5}
        \Big\|y - \int_{\mathbb{I}^{d}} \psi^{(1)}_{[M]}(t_1)\otimes\cdots\otimes\psi^{(d)}_{[M]}(t_d)  \mu [\text{d}(t_1, \cdots, t_d)] \Big\|_{F} \leq \delta'
    \end{equation}}
\end{center}
for some $\delta' \geq \delta$. Note that this program is grid-free and has no regularity other than the non-negativity of the Borel measure compared to other methods where the measure's total variation is assumed to be constant~\cite{candes2014towards, tang2013compressed, fyhn2013spectral}. The dual program to the program (\ref{eq::5}) can be used to find the unknown point sources and their amplitudes. When there is no noise, i.e., $\delta' = 0$, we show that the solution of (\ref{eq::5}) is exact and unique under certain conditions for the component functions of the point spread function. In the presence of noise, i.e. $\delta > 0$, 
%where $\mu$ is a continuous measure supported on $\mathbb{I}^{d}$,
we show that the solution of program (\ref{eq::5})  well approximates the true measure, by showing an upper bound for the recovery error.

We generalize the main results obtained in~\cite{eftekhari2021stable} from 2 to higher dimensions. We address the \textit{{noiseless case}} in Section \ref{sec:2}. We show in Theorem \ref{theo:1} that if the true measure $\mu$ consists of $K$ point sources, then the unique solution of program (\ref{eq::5}) is the true measure $\mu$. Here we assume that the translates of the component-wise functions $\psi^{(j)}$ form Tchebychev systems and that $M \geq 2K+1$, where our observations $y \in (\mathbb{R}^{M})^{\bigotimes d}$. Moreover, no minimum separation between the  point sources is required, and, therefore, program (\ref{eq::5}) recovers correct solution no matter how close to each other the point sources may be. 

We address the \textit{{noisy case}} in Section~\ref{sec:3}. We assume the true measure is an arbitrary measure $\mu$ supported on $\mathbb{I}^{d}$ and the noise level is $\delta > 0$. Theorem \ref{theo:2} then shows that if the translates of the component-wise functions form $T^*$-systems, and given enough observations, the solution of program (\ref{eq::5}) well-approximates the true image measure in Generalized-Wasserstein distance. We use two auxiliary Lemmas \ref{lem:3} and \ref{lem:4}, which utilize the existence of dual certificates $Q$ and $Q^{0}$, to bound the errors away from the support and near the support. The existence of such dual certificates shown in Propositions \ref{prop:1} and \ref{prop:2} is a generalization to all dimensions of Propositions 21 and 22 in~\cite{eftekhari2021stable} which address this problem in 2 dimensions. We give an upper bound on the recovery error in terms of the noise level and the minimum separation between the points sources (see Theorem~\ref{theo:2}). 

The $T$-system and $T^{*}$-system assumptions have already been shown to hold for translated copies of Gaussian functions~\cite{eftekhari2021stable, eftekhari2021sparse}. Therefore, all of our results hold for Gaussian point spread functions that satisfy the decomposability criterion~\eqref{eq::3}. 

 The proofs of all theorems, lemmas, and propositions can be found in the appendices. 
%\section{Main results} 
\section{The noiseless case}\label{sec:2}
Let $\mu$ be a nonnegative atomic measure 
\begin{equation*}
    \mu = \sum_{k=1}^{K} a_{k} \delta_{\theta_k}, \;\;\; a_k >0
\end{equation*}
with $K \geq 1$ impulses located at $\Theta = \{\theta_k\}_{k=1}^{K} \subset \text{interior}(\mathbb{I}^{d})$ with positive amplitudes $\{a_k\}_{k=1}^{K}$, where $\theta_k = (t_1^{(k)}, \ldots, t_d^{(k)})\in\mathbb R^d$. Consider the case where there is no imaging noise ($\delta = 0$). In this case, we collect noise-free measurements 
\begin{equation*}
    y = \int_{\mathbb{I}^{d}} \psi^{(1)}_{[M]}(t_1)\cdots\psi^{(d)}_{[M]}( t_d)\mu[\text{d}(t_1, \cdots, t_d)] \in (\mathbb{R}^{M})^{\bigotimes d}.
\end{equation*}
To understand when solving program (\ref{eq::5}) with $\delta' = 0$ successfully recovers the true measure $\mu$, recall the concept of a T-system~\cite{karlin1966tchebycheff}:
\begin{definition}\label{def:T_system}
Several real-valued and continuous functions $\{\phi_j\}_{j=1}^{m}$ form a {\em Tchebychev system} (or \textit{T-system}) on the interval $\mathbb{I}$ if the $m\times m$ matrix $[\phi_j(\tau_i)]_{i, j= 1}^{m}$ is nonsingular for any increasing sequence $\{\tau_{i}\}_{i=1}^{m} \subset \mathbb{I}$. 
\end{definition}

Many sets of functions, such as the monomials $1,t,t^2,\ldots, t^{m-1}$ as well as shifted translates of Gaussian functions, are known to form $T$ systems. Given a $T$-system $\{\phi_m\}_{m=1}^{M}$, any linear combination of the elements $\sum_{i=1}^{m}b_{i} \phi_{i}$ is often called a "polynomial", and, like real polynomials of degree $m-1$, it has at most $m-1$ zeros on the real line. We will use some of the properties of $T$-systems to form a dual {\em polynomial} certificate for our program (\ref{eq::5}), which will be a linear combination of shiften copies of the point-spread function. The dual polynomial certificate will be the unique solution of the dual program to \eqref{eq::5} and will correspond to the unique solution to the original program \eqref{eq::5}. The construction of a dual certificate is a classical technique used in compressed sensing~\cite{candes2006robust, fuchs2005sparsity}.

\begin{theorem}[Noiseless measurements]\label{theo:1}
Let $\mu$ be a $K$-sparse nonnegative measure supported on interior$(\mathbb{I}^{d})$. Let also $M \geq 2K+1$, and suppose that the functions $\{\psi^{(i)}_{m_{i}}\}_{m_i=1}^{M}$ form a T-system on $\mathbb{I}$ for every $i=1,\ldots, d $. Lastly, consider the image $y \in (\mathbb{R}^{M})^{\bigotimes d}$ of the measure $\mu$ in \eqref{eq::4} with noise level $\delta = 0$. Then, $\mu$ is the unique solution of program \eqref{eq::5} with $\delta'=0$.
\end{theorem}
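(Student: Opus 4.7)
The plan is to mount a dual-certificate argument, in the spirit of the 1D analysis of~\cite{schiebinger2018superresolution} and the 2D analysis of~\cite{eftekhari2021stable}. The centerpiece is a function $Q:\mathbb{I}^d \to \mathbb{R}$ of the form
$$Q(t_1,\ldots,t_d) \;=\; \sum_{i_1,\ldots,i_d=1}^{M} c_{i_1,\ldots,i_d}\,\psi^{(1)}_{i_1}(t_1)\cdots\psi^{(d)}_{i_d}(t_d),$$
so that $Q$ lies in the range of the adjoint of the measurement operator and satisfies $Q(\theta_k)=0$ for every $k$ together with $Q(t)>0$ for every $t\in\mathbb{I}^d\setminus\Theta$. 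I would build such a $Q$ first and then read off uniqueness from it.

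Given such a $Q$, the rest is routine. Let $\nu$ be any nonnegative Borel measure feasible for~\eqref{eq::5} with $\delta'=0$. Because $Q$ is in the range of the adjoint and $\mu,\nu$ produce identical measurements, $\int Q\,d\mu=\int Q\,d\nu$; the left-hand side equals $\sum_k a_k Q(\theta_k)=0$. Combined with $\nu\ge 0$, continuity of $Q$, and the strict positivity of $Q$ off $\Theta$, this forces $\nu$ to be supported on $\Theta$, so $\nu=\sum_{k=1}^K b_k \delta_{\theta_k}$ for nonnegative weights $b_k$. Matching measurements of $\mu$ and $\nu$ then yields
$$\sum_{k=1}^{K}(b_k-a_k)\,\psi^{(1)}_{i_1}(t_1^{(k)})\cdots\psi^{(d)}_{i_d}(t_d^{(k)}) \;=\; 0 \qquad \forall\,(i_1,\ldots,i_d)\in[M]^d,$$
equivalently a linear dependence among the rank-one tensors $\psi^{(1)}_{[M]}(t_1^{(k)})\otimes\cdots\otimes\psi^{(d)}_{[M]}(t_d^{(k)})$. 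I would show these $K$ tensors are linearly independent by induction on $d$: group the $\theta_k$ by their last-coordinate value, invoke the T-system property in coordinate $d$ (applied to the at-most-$K\le M$ distinct values that arise) to peel off the last mode, and apply the inductive hypothesis to the $(d-1)$-way factors within each group. This pins down $b_k=a_k$ and concludes $\nu=\mu$.

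The main obstacle is the construction of $Q$ when $d>1$. In one dimension, the existence of a nonnegative T-system ``polynomial'' with double zeros at $K$ prescribed knots, using $M\ge 2K+1$ basis functions, is classical. The naive higher-dimensional tensorization fails: setting $Q(t)=\sum_{j=1}^{d}q_j(t_j)$ with each univariate $q_j\ge 0$ vanishing on the coordinate projection of $\Theta$ does not isolate $\Theta$, because distinct $\theta_k$'s may share projections --- for example in 2D the three points $(0,0),(0,1),(1,0)$ project to $\{0,1\}$ in each coordinate, so $q_1(t_1)+q_2(t_2)$ also vanishes at the spurious point $(1,1)$. My plan is therefore to proceed by induction on $d$: let $u_1<\cdots<u_L$ be the distinct last-coordinate values with slices $\Theta_\ell=\{\theta_k:t_d^{(k)}=u_\ell\}$; build a $(d-1)$-dimensional certificate $\widetilde Q_\ell$ for the projected slice of each $\Theta_\ell$ by the inductive hypothesis; and then splice the slice certificates together using nonnegative univariate T-system building blocks in the last coordinate --- nonnegative Lagrange-type interpolants $\lambda_\ell(t_d)$ peaked at $u_\ell$ and vanishing at every $u_{\ell'}$ with $\ell'\neq\ell$, together with a nonnegative 1D certificate vanishing on $\{u_1,\ldots,u_L\}$. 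Since $L\le K$ and $M\ge 2K+1\ge 2L+1$, the univariate T-system has enough degrees of freedom for each piece. The delicate book-keeping --- simultaneously ensuring that $Q$ is globally nonnegative, strictly positive off $\Theta$, and still lies in the full tensor-product span --- directly generalizes the 2D construction of~\cite{eftekhari2021stable} and is where the bulk of the technical work lies.
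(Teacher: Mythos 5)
Your overall strategy --- build a dual certificate $Q$ in the tensor-product span, use $\int Q\,d\mu=\int Q\,d\nu=0$ together with $Q\ge 0$ and $Q>0$ off $\Theta$ to force any feasible $\nu$ to be supported on $\Theta$, and then pin down the weights via linear independence of the rank-one measurement tensors --- is the same as the paper's (Lemmas~\ref{lem:2} and~\ref{lem:2.1}). Your inductive argument for the independence of the $K$ tensors $\psi^{(1)}_{[M]}(t^{(k)}_1)\otimes\cdots\otimes\psi^{(d)}_{[M]}(t^{(k)}_d)$ (peel off the last mode by grouping on $t_d^{(k)}$, invoke the T-system there, recurse) is a welcome addition, since the paper states this full-rank requirement as the first bullet of Lemma~\ref{lem:2} but never verifies it. Where you genuinely diverge is in the construction of $Q$. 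The paper does not induct on $d$: for each of the $d^K$ ordered partitions $(\Omega_1,\ldots,\Omega_d)$ of $[K]$ (empty blocks allowed), it takes a univariate nonnegative T-polynomial $q_{T_{\Omega_i}}$ vanishing exactly on $T_{\Omega_i}=\{t^{(i)}_k : k\in\Omega_i\}$, and sets $Q=\sum_{\pi}\prod_{i=1}^d q_{T_{\Omega_i}}(t^{(i)})$. Each summand kills every $\theta_k$ because $k$ lies in some block, and positivity off $\Theta$ follows by choosing, for any given $\theta$, a partition that routes each offending index $k$ away from the coordinate it collides with --- this disposes of your $(0,0),(0,1),(1,0)$ example without slicing. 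Your alternative (slice by $t_d$, build $(d-1)$-dimensional certificates for each slice, splice with nonnegative T-system interpolants $\lambda_\ell$ and a ``boundary'' term vanishing on $\{u_1,\ldots,u_L\}$) also works, with one point to fix: the last term is a function of $t_d$ alone, so to keep $Q$ inside $\operatorname{span}\{\psi^{(1)}_{i_1}\cdots\psi^{(d)}_{i_d}\}$ you must multiply it by a strictly positive T-polynomial in each of the first $d-1$ coordinates (Lemma~\ref{lem:uni} with $T'=\emptyset$ supplies one). Both constructions rest on the same univariate existence lemma and use only $M\ge 2K+1$; the paper's is closed-form and coordinate-symmetric, yours is recursive and makes the off-support positivity check more local.
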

In other words, given at least $(2K+1)^{d}$ samples from an image consisting of $K$ point sources, program \eqref{eq::5} recovers the $K$ point sources exactly. Here, we require the component-wise functions $\{\psi^{(i)}_{m_{i}}\}_{m_i=1}^{M}$ to form a $T$-system  for every $i = 1,..., d$. Lemmas \ref{lem:2} and \ref{lem:2.1} below establish Theorem~\ref{theo:1}. Their proofs are located in Appendices \ref{apx:A} and \ref{apx:B}, respectively.

\begin{lemma}[Uniqueness of the measure]\label{lem:2}
Let $\mu$ be a $K$-sparse nonnegative atomic measure supported on $\Theta \subset interior(\mathbb{I}^{d})$. Then, $\mu$ is the unique solution of program \eqref{eq::5} with $\delta'=0$ if 
\begin{itemize}
    \item the $M^{d} \times K$ matrix $[\psi^{(1)}_{i_1}(t^{(k)}_{1}) \cdots \psi^{(d)}_{i_d}(t^{(k)}_{d})]_{i_1, \cdots, i_d, k=1}^{i_1, \cdots, i_d = M, k = K}$ has full rank, and 
    \item there exist real coefficients $\{b_{i_1, \cdots, i_d}\}_{i_1, \cdots, i_d = 1}^{M}$ and a polynomial $$Q(t_1, \cdots, t_d) = \sum_{i_1, \cdots, i_d =1}^{M}b_{i_1, \cdots, i_d} \psi^{(1)}_{i_1}(t_1)\cdots \psi^{(d)}_{i_d}(t_d)$$ such that $Q$ is nonnegative on $interior(\mathbb{I}^{d})$ and vanishes only on $\Theta$.
\end{itemize}
\end{lemma}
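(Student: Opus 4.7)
The plan is to run a standard dual certificate argument, with the dual polynomial $Q$ from the hypothesis playing the role of the certificate and the rank condition used at the end to pin down the weights.

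First I would take an arbitrary nonnegative Borel measure $\nu$ on $\mathbb{I}^d$ that is feasible for \eqref{eq::5} with $\delta'=0$, and compare it to $\mu$. Feasibility (with $\delta=0$) means that for every multi-index $(i_1,\ldots,i_d)\in [M]^d$,
\[
\int_{\mathbb{I}^d}\psi^{(1)}_{i_1}(t_1)\cdots\psi^{(d)}_{i_d}(t_d)\,(\nu-\mu)[\mathrm{d}(t_1,\ldots,t_d)]=0.
\]
Multiplying by the coefficients $b_{i_1,\ldots,i_d}$ guaranteed by the second hypothesis and summing over all indices gives $\int_{\mathbb{I}^d} Q\,\mathrm{d}(\nu-\mu)=0$. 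Since $\mu$ is supported on $\Theta$ and $Q\equiv 0$ on $\Theta$, we get $\int_{\mathbb{I}^d} Q\,\mathrm{d}\mu=0$, and hence $\int_{\mathbb{I}^d} Q\,\mathrm{d}\nu=0$.

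Next I would combine this with the positivity properties of $Q$ to localize the support of $\nu$. Because $Q\geq 0$ on $\mathrm{interior}(\mathbb{I}^d)$ with equality only on $\Theta$, and $\nu$ is a nonnegative measure on $\mathbb{I}^d$, the identity $\int Q\,\mathrm{d}\nu=0$ forces $\nu$ to be supported on $\Theta$ together with the (boundary) zero set of $Q$; since $\Theta\subset\mathrm{interior}(\mathbb{I}^d)$, and $Q$ is continuous and strictly positive on $\mathrm{interior}(\mathbb{I}^d)\setminus\Theta$, the only potential extra atoms lie on $\partial\mathbb{I}^d$. This is the main subtlety: one has to argue that $Q$ is actually strictly positive away from $\Theta$ on the \emph{closed} cube, which will follow from the explicit construction of $Q$ to be supplied in the subsequent sections (Propositions \ref{prop:1} and \ref{prop:2}) whose boundary behavior carries over to the noiseless case, or alternatively by approximating any boundary atom by an interior sequence along which $Q$ is bounded below and invoking continuity. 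Either way, one concludes $\mathrm{supp}(\nu)\subseteq\Theta$, so $\nu=\sum_{k=1}^K a'_k\delta_{\theta_k}$ for some nonnegative weights $a'_k$.

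Finally I would invoke the first hypothesis to finish. The moment equations at the $M^d$ measurement indices become
\[
\sum_{k=1}^{K}(a'_k-a_k)\,\psi^{(1)}_{i_1}(t^{(k)}_1)\cdots\psi^{(d)}_{i_d}(t^{(k)}_d)=0\qquad\text{for every }(i_1,\ldots,i_d)\in[M]^d,
\]
which says that the vector $(a'_k-a_k)_{k=1}^K$ lies in the kernel of the $M^d\times K$ matrix of the first bullet. Full rank of this matrix then forces $a'_k=a_k$ for all $k$, giving $\nu=\mu$ and the claimed uniqueness.

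The main obstacle I anticipate is the boundary issue in the middle step: showing that the vanishing of $\int Q\,\mathrm{d}\nu$ really confines $\nu$ to $\Theta$ rather than to $\Theta\cup(\partial\mathbb{I}^d\cap\{Q=0\})$. Once that is handled (via the concrete form of $Q$ constructed later in the paper or by a continuity/approximation argument), everything else reduces to routine manipulation of moment conditions and the kernel of the full-rank matrix.
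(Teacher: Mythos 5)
Your proposal is correct and follows the same dual certificate route as the paper's proof in Appendix~\ref{apx:A}: integrate the noiseless moment equalities against the coefficient tensor $b$ to get $\int Q\,\mathrm{d}\nu=\int Q\,\mathrm{d}\mu=0$, then use positivity of $Q$ off $\Theta$ to confine $\mathrm{supp}(\nu)$ to $\Theta$, then use the full-rank condition. Two remarks. First, on the boundary worry: since $Q$ is a finite linear combination of continuous functions it is continuous on the closed cube, and nonnegativity on $\mathrm{interior}(\mathbb{I}^d)$ then propagates to all of $\mathbb{I}^d$ by continuity; combined with the clause that $Q$ \emph{vanishes only on} $\Theta$ (read as a statement about the full zero set), this forces $Q>0$ on $\mathbb{I}^d\setminus\Theta$ including the boundary, so no extra boundary atoms can survive. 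No appeal to later constructions is needed, and in any case the relevant noiseless construction is Lemma~\ref{lem:2.1}, not Propositions~\ref{prop:1}--\ref{prop:2}, which belong to the noisy case. Second, your final step --- invoking full rank of the $M^d\times K$ matrix to force $a'_k=a_k$ --- is a genuine and necessary completion of the argument; the paper's written proof in Appendix~\ref{apx:A} stops after establishing $\hat\mu=0$ on $\mathbb{I}^d\setminus\Theta$ and leaves this last use of the rank hypothesis implicit.
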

Our next lemma establishes the second one of the above conditions, namely the existence of a dual polynomial certificate for our program.
\begin{lemma}[Existence of a dual polynomial certificate]\label{lem:2.1}
Let $\mu$ be a $K$-sparse non-negative atomic measure supported on interior$(\mathbb{I}^d)$. For $M \geq 2K+1$, suppose that $\{\psi^{(i)}_{m_i}\}_{m_i = 1}^{M}$ form a T-system on $\mathbb{I}$ for $i = 1, ..., d$. Then the dual certificate $Q$ in Lemma \ref{lem:2} exists. 
\end{lemma}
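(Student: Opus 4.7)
My plan is to prove Lemma \ref{lem:2.1} by induction on the dimension $d$, bootstrapping the classical 1D T-system theory via a recursive tensor-product construction. For the base case $d = 1$, the certificate $Q(t_1) = \sum_{i=1}^{M} b_i \psi^{(1)}_{i}(t_1)$ is the standard ``double-zero'' T-polynomial: given a T-system of dimension $M \geq 2K+1$ and any $K$ interior points, the classical T-system literature (as invoked in \cite{schiebinger2018superresolution}) produces a nonnegative element of the span that vanishes to second order at exactly those $K$ points, and is strictly positive elsewhere on $\mathbb{I}$.

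For the inductive step, assume the lemma in dimension $d-1$. Let $T_1 = \{s_1 < \cdots < s_J\}$, $J \leq K$, be the distinct first coordinates of the atoms of $\mu$, and partition $\Theta = \bigsqcup_{j=1}^{J} \Theta_j$ accordingly, writing $\Theta'_j \subset \mathbb{I}^{d-1}$ for the projection of $\Theta_j$ onto the last $d-1$ coordinates. Since $|\Theta'_j| \leq K$ and $M \geq 2K+1 \geq 2|\Theta'_j|+1$, the inductive hypothesis supplies for each $j$ a nonnegative $(d-1)$-variate dual certificate $Q_j(t_2, \ldots, t_d)$ in the span of the tensor products $\prod_{i=2}^{d}\psi^{(i)}_{m_i}$ that vanishes exactly on $\Theta'_j$. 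Applying the 1D base case in coordinate $1$, also build (i) a nonnegative $p(t_1) \in \mathrm{span}\{\psi^{(1)}_{m}\}_{m=1}^{M}$ vanishing as double zeros exactly on $T_1$, and (ii) for each $j \in [J]$, a nonnegative $\alpha_j(t_1) \in \mathrm{span}\{\psi^{(1)}_{m}\}_{m=1}^{M}$ vanishing on $T_1 \setminus \{s_j\}$ with $\alpha_j(s_j) > 0$; both fit within the $M \geq 2K+1$ budget since $J \leq K$. Finally, pick a strictly positive function $P(t_2, \ldots, t_d)$ in the $(d-1)$-dimensional tensor-product span, and define
$$Q(t_1, \ldots, t_d) \;:=\; p(t_1)\, P(t_2, \ldots, t_d) \;+\; \sum_{j=1}^{J} \alpha_j(t_1)\, Q_j(t_2, \ldots, t_d).$$

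Nonnegativity of $Q$ is immediate because each summand is a product of nonnegative factors, and expanding each factor in its T-system basis yields the required tensor-product form. For the zero set: if $t_1 \notin T_1$ then $p(t_1) > 0$ and $P > 0$ force $Q > 0$; if $t_1 = s_j$ then $p(s_j) = 0$ and $\alpha_\ell(s_j) = 0$ for $\ell \neq j$, so $Q(s_j, t_2, \ldots, t_d) = \alpha_j(s_j)\, Q_j(t_2, \ldots, t_d)$, which vanishes if and only if $(t_2, \ldots, t_d) \in \Theta'_j$, equivalently if and only if $(s_j, t_2, \ldots, t_d) \in \Theta$. The main obstacle I anticipate is securing the strictly positive $P$ from the T-system hypothesis alone: for Gaussian (or otherwise strictly positive) $\psi^{(i)}_m$ one may simply take $P = \prod_{i=2}^{d}\psi^{(i)}_{1}(t_i)$, but in full generality this may need an auxiliary argument based on principal representations of nonnegative T-polynomials, or a slight strengthening of the inductive hypothesis to supply such a $P$ alongside $Q$.
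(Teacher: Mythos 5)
Your argument is correct, but it takes a genuinely different route from the paper's. The paper's proof is not inductive: it fixes the basis given by Lemma~\ref{lem:uni}, and for each of the $d^K$ ordered partitions $\pi = (\Omega_1,\dots,\Omega_d)$ of $[K]$ into $d$ (possibly empty) blocks it forms the rank-one term $q_{T_{\Omega_1}}(t^{(1)})\cdots q_{T_{\Omega_d}}(t^{(d)})$ with $T_{\Omega_i}=\{t^{(i)}_k : k\in\Omega_i\}$, and sets $Q$ to be the sum of these over all partitions. Vanishing on $\Theta$ holds because each atom index $k$ lies in some block $\Omega_i$, killing that factor; positivity off $\Theta$ is a case analysis on which coordinates of $\theta$ collide with atom coordinates, choosing a partition that places each colliding index into a block where it does no harm. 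Your construction instead recurses on $d$: group atoms by first coordinate, use the $(d-1)$-dimensional certificates $Q_j$ for the slices $\Theta'_j$, and glue via univariate ``double-zero'' polynomials $p,\alpha_j$. Both are valid; the checks of nonnegativity, vanishing exactly on $\Theta$, and membership in the tensor-product span go through as you wrote.

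Your flagged worry about the strictly positive $P$ is actually a non-issue given the very lemma you invoke: apply Lemma~\ref{lem:uni} with $T'=\emptyset$ (so $K'=0\le K$) to each coordinate $i=2,\dots,d$ to get a strictly positive univariate polynomial $q^{(i)}_{\emptyset}\in\operatorname{span}\{\psi^{(i)}_m\}_{m=1}^M$, and set $P=\prod_{i=2}^d q^{(i)}_{\emptyset}(t_i)$. No extra hypothesis on the $\psi^{(i)}_m$ is needed. One other minor point: when $t_1\notin T_1$ the term $\sum_j\alpha_j Q_j$ can vanish (e.g.\ if several atoms share the same last $d-1$ coordinates), so the $pP$ term really is necessary and your argument correctly relies on it rather than on the $\alpha_jQ_j$ terms.

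As to what each approach buys: the paper's partition sum is symmetric in all $d$ coordinates and, more importantly, is reused essentially verbatim in the noisy setting (Proposition~\ref{prop:1}), where the same combinatorial structure produces the explicit quantitative lower bound $\bar g=(d-1)d^{K-2}$ needed to control $\int_{\Theta_\epsilon^C}h$. Your inductive construction is arguably cleaner for the purely qualitative noiseless statement, but it does not immediately yield a uniform positive lower bound away from $\Theta$ — the recursion compounds $\min$'s and products of certificates whose lower bounds would have to be tracked through the induction — so it would need additional bookkeeping to serve the noisy analysis that the paper builds on top of this lemma.
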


Combining these two lemmas, we see that the true sparse measure $\mu$ is the unique solution to our optimization problem, which proves Theorem~\ref{theo:1}. 

\section{The noisy case}\label{sec:3}
In this section, we present our generalized results for the noisy case. 
%Theorem \ref{theo:2} generalizes the result in the Theorem \ref{theo:1} for presence of noise. 
Since the measure in this case, might not be atomic, we use atomic measures to approximate both true measure and the solution of program (\ref{eq::5}). We use the Generalized-Wasserstein distance for the recovery error. Before stating the main Theorem \ref{theo:2}, we introduce several necessary concepts.
%\textcolor{red}{add explanations to def, prop, theorems}

\begin{definition}[Separation]\label{def:separation}
For an atomic measure $\mu$ supported on $\Theta = \{\theta_k\}_{k=1}^{K} = \{(t_{k}^{(1)}, \ldots, t_{k}^{(d)})\})_{k=1}^{K}$, let \textit{sep$(\mu)$} denote the \textit{minimum separation} between all impulses in $\mu$ and the boundary of $\mathbb{I}^{d}$. That is, $sep(\mu)$, is the largest number $v$ such that 
$$v \leq |t_{k}^{(i)} - t_{l}^{(i)}|, \; \forall\; k \neq l;\; k, l \in [K], i \in [d]$$
\begin{align*}
    v \leq |t_{k}^{(i)} - 0|, \;\;\; v  \leq |t_{k}^{(i)} - 1|\; \forall \;i \in [d], k \in [K].
\end{align*}
\end{definition}
We call a measure $\mu$, which satisfies $sep(\mu) = \epsilon$, an \textit{$\epsilon$-separated measure}. This means that all point sources are separated from each other and the boundary by at least $\epsilon$.

In Figure \ref{fig:my_label}, the separation of the atomic measure $\mu$ supported at two points (points $A$ and $B$) is  
\begin{align*}
    sep(\mu) = \min\{&t^{(1)}_{A}, t^{(2)}_{A}, t^{(3)}_{A}, t^{(1)}_{B}, t^{(2)}_{B}, t^{(3)}_{B}, \\ &1 - t^{(1)}_{A}, 1 - t^{(2)}_{A}, 1- t^{(3)}_{A}, 1 - t^{(1)}_{B}, 1 - t^{(2)}_{B}, 1- t^{(3)}_{B} \\
    &|t^{(1)}_{A} - t^{(1)}_{B}|, |t^{(2)}_{A} - t^{(2)}_{B}|, |t^{(3)}_{A} - t^{(3)}_{B}|\}
\end{align*}
%\textcolor{red}{maybe add some visual explanations in 2D - separation}
\begin{figure}
    \centering
    \includegraphics[width=0.4\textwidth]{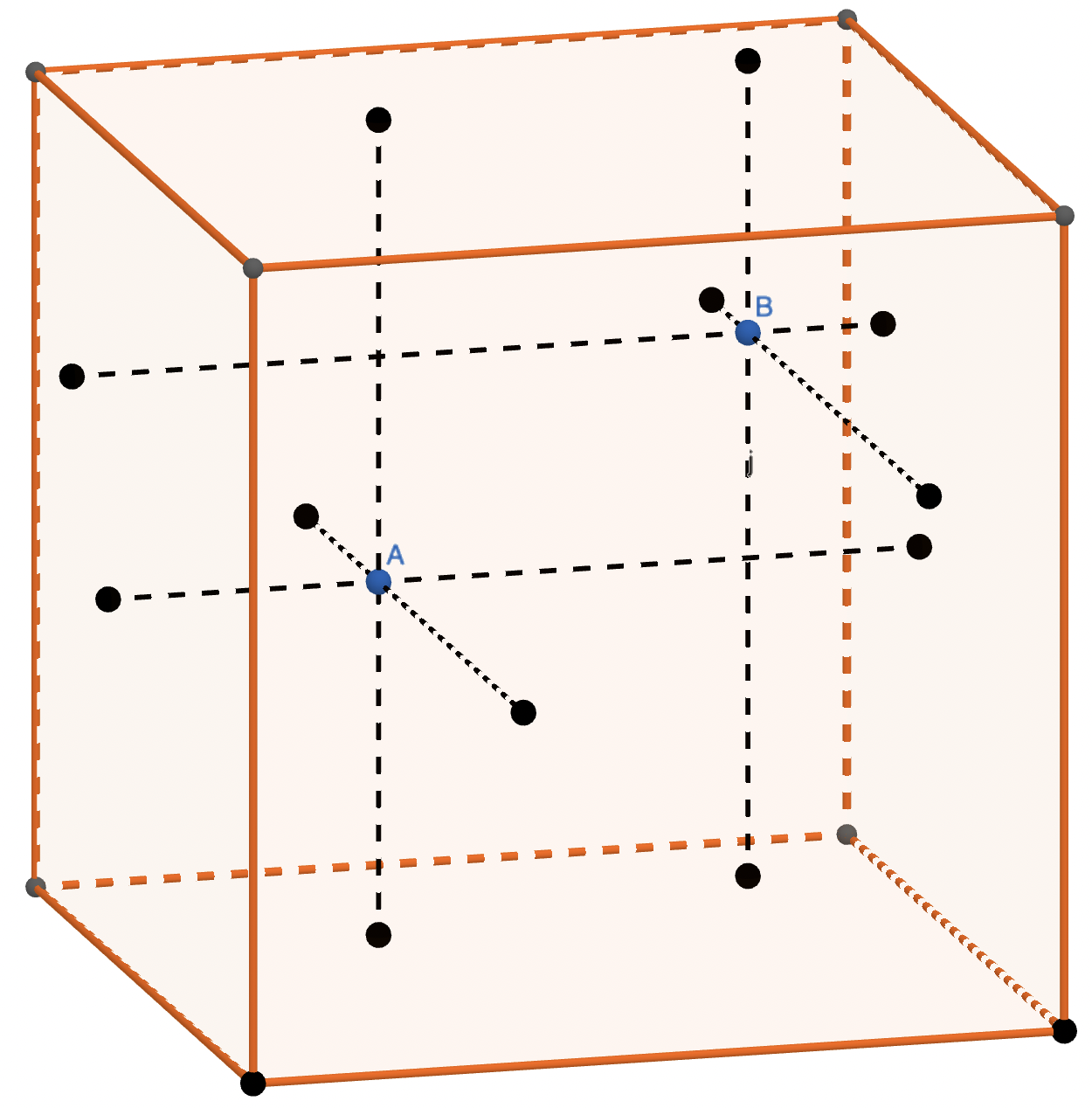}
    \caption{Separation in 3-D}
    \label{fig:my_label}
\end{figure}

Next, we introduce the Generalized-Wasserstein distance~\cite{piccoli2014generalized}, a natural notion of distance between measures that has been widely used in the optimal transport literature. We first recall the total variation (TV) norm and Wasserstein distance~\cite{villani2008optimal}. The total variation norm of a measure $\mu$ supported on $\mathbb{I}^{d}$ is $\|\mu\|_{TV} = \int_{\mathbb{I}^{d}}|\mu(\text{d}\theta)|$. It is similar to the $l_1$ norm for measures in finite dimensions. Wasserstein distance is defined for two non-negative measures $\mu_1$ and $\mu_2$, supported on $\mathbb{I}^{d}$, as 

\begin{equation}\label{eq:wasserstein}
    d_{W}(\mu_1, \mu_2) = \inf \int_{\mathbb{I}^{d} \times \mathbb{I}^{d}} \|\tau_1 - \tau_2\|_{1} \gamma(\text{d}\tau_1, \text{d}\tau_2)
\end{equation}
where the infimum is over every non-negative measure $\gamma$ on $\mathbb{I}^{d} \times \mathbb{I}^{d}$ that produces $\mu_1$ and $\mu_2$ as marginals, i.e., 
\begin{equation}\label{eq:marginals}
    \mu_1(A_1) = \int_{A_1 \times \mathbb{I}^d} \gamma(\text{d}\tau_1, \text{d}\tau_2), \;\;\;\;\mu_2(A_2) = \int_{\mathbb{I}^{d}\times A_2}\gamma(\text{d}\tau_1, \text{d}\tau_2) 
\end{equation}
for all measurable sets $A_1, A_2 \subseteq \mathbb{I}^d$. %In our case, the Wasserstein distance of two measures $\mu_1$ and $\mu_2$, which represent images, and can be understood as the amount of work needed to transform the first image ($\mu_1$) to the second image ($\mu_2$). 
To find the Wasserstein  distance between $\mu_1$ and $\mu_2$, we require that the total variation norms of the two measures are equal, i.e., $\|\mu_1\|_{TV} = \|\mu_2\|$. However, this might might not be the case if $\mu_1$ is the true image measure and $\mu_2$ is the measure recovered by \eqref{eq::5}. Therefore, similar to~\cite{eftekhari2021stable}, we use the Generalized-Wasserstein distance between measures with unequal total variation. 

\begin{definition}[Generalized-Wasserstein distance]\label{def:GW}
For two non-negative measures $\mu_1$ and $\mu_2$, supported on $\mathbb{I}^{d}$, the \textit{Generalized-Wasserstein distance} between $\mu_1$ and $\mu_2$ is defined as 
\begin{equation}\label{eq:GW}
    d_{GW}(\mu_1, \mu_2) = \inf (\|\mu_1 - z_1\|_{TV} + d_{W}(z_1, z_2) + \|\mu_2 - z_2\|_{TV}).
\end{equation}
where the infimum is over pairs of non-negative Borel measures $z_1$ and $z_2$ supported on $\mathbb{I}^{d}$ such that $\|z_1\|_{TV} = \|z_2\|_{TV}$.
\end{definition}
%The Generalized-Wasserstein distance consists of three components; first, we consider all non-negative measures $z_1$ and $z_2$, which have the same total variation norms, and we minimize the sum of the differences $\|\mu_1 - z_1\|$, $\|\mu_2 - z_2\|$ and the Wasserstein distance of $z_1$, $z_2$; over all such measures. 
We use the Generalized-Wasserstein distance $d_{GW}(\mu, \hat{\mu})$ to bound the distance between $\mu$ and $\hat\mu$ in Theorem~\ref{theo:2}, where $\mu$ is the true measure and $\hat{\mu}$ is the solution to the program (\ref{eq::5}). In bounding this quantity, we approximate the true measure by a sparse and well-separated atomic measure.  

\begin{definition}[Residual] \label{def:residual}
For a non-negative measure $\mu$ supported on $\mathbb{I}^{d}$, an integer $K$, and $\epsilon \in (0, 1/2]$, we define the residual
\begin{equation}\label{eq:residual}
    R(\mu, K, \epsilon) := d_{GW}(\mu, \mu_{K, \epsilon}) = \min d_{GW}(\mu, \nu) 
\end{equation}
where the minimum above is taken over all non-negative $K$-sparse and $\epsilon$-separated measures $\nu$ supported on interior($\mathbb{I}^{d}$). 
\end{definition}
%there exists a $K$-sparse non-negative measure $\mu_{K, \epsilon}$ that is $\epsilon$-separated and well approximates $\mu$. In other words, for any $\epsilon \in (0, 1/2]$, there exists a $K$-sparse and $\epsilon$-separated non-negative measure $\mu_{K, \epsilon}$ such that 
In other words, $R(\mu, K, \epsilon)$ can be thought of as the \textit{mismatch} between the true measure $\mu$ and the closest well-separated sparse measure $\mu_{K,\varepsilon}$. The existence of the minimum in the above definition is explained in \cite{eftekhari2021stable}. 

In addition, we will impose a natural smoothness condition on the imaging apparatus, namely, L-Lipschitz-continuity.  
\begin{definition}[Smoothness]\label{def:smoothness}
The imaging apparatus $\Psi(\theta)$ is a \textit{L-Lipschitz-continuous} if 
\begin{equation}\label{eq:smoothness}
    \Big\|\int_{\mathbb{I}^{d}} \Psi(\theta) (\mu_1(\text{d}\theta) - \mu_2(\text{d}\theta)\Big\|_{F}\leq L \cdot d_{GW} (\mu_1, \mu_2)
\end{equation}
for every pair of measures $\mu_1, \mu_2$ supported on $\mathbb{I}^{d}$.
\end{definition}

%\textcolor{red}{add one example for smoothness or reference to the example in 2-D paper}

Finally, following~\cite{eftekhari2021stable} and~\cite{eftekhari2021sparse} we also need to generalize the concept of a $T$-system so that we can use it in the noisy case. Firstly, we introduce  admissible sequences, which are sequences of ordered indexed numbers from $\mathbb{I}$ with specific properties. The limits of these numbers (except constants) approach at most $K$ different numbers in $\mathbb{I}$, and each limiting number has an even multiplicity, except for exactly one. 

\begin{definition}[Admissible sequence]\label{def:admissible_sequence}
For a pair of integers $K$ and $M$ obeying $M \geq 2K + 1$, we say that $\{\{\tau_{k}^{n}\}_{k=0}^{M}\}_{n \geq 1} \subset \mathbb{I}$ is a \textit{$(K, \epsilon)$-admissible sequence} if: 
\begin{itemize}
    \item $\tau^{n}_{0} = 0$ and $\tau_{M}^{n} = 1$ for every $n$
    \item As $n \to \infty$, the increasing sequence $\{\{\tau_{k}^{n}\}_{k=1}^{M-1}\}$ converges (element-wise) to an $\epsilon$-separated finite subset of $\mathbb{I}$ with at most $K$ distinct points, where every element has an even multiplicity, except one element that appears only once. 
\end{itemize}
\end{definition}
%\cite[Figure~2]{eftekhari2021stable}
\begin{figure}
    \centering
    \includegraphics[width=0.7\textwidth]{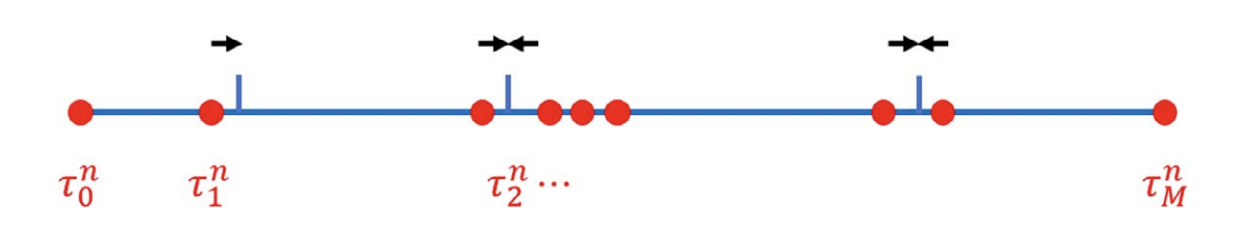}
    \caption{Figure from "Stable super-resolution of images: theoretical study" by \protect\citeauthor{eftekhari2021stable}, 2021, \protect\textit{Information and Inference: A Journal of the IMA, 10} (1), p. 169}
    \label{fig:sequence}
\end{figure}
Recall that  $T$-systems are defined for all increasing sequences in $\mathbb{I}$. In contrast,  $T^*$-systems are defined for all admissible sequences, requiring that the determinant of the matrix evaluated at subsequences of an admissible sequence is positive and all of the minors of this matrix along a row converge to zero at the same rate.

\begin{definition}[$T^*$-system]\label{def:T*_system}
For an integer $K$ and an even integer $M$ obeying $M \geq 2K + 2$, the real-valued functions $\{\phi_{m}\}_{m = 1}^{M}$ form a $T^{*}_{K, \epsilon}$-system if for every $(K, \epsilon)$-admissible sequence $\{\{\tau_{k}^{n}\}_{k=0}^{M}\}_{n \geq 1}$:
\begin{itemize}
    \item The determinant of the $(M+1) \times (M+1)$ matrix $[\phi_{m}(\tau^{n}_{k})]_{k, m = 0}^{M}$ is positive for all sufficiently large~$n$. 
    \item All minors along the $l$th row of the above matrix $[\phi_{m}(\tau^{n}_{k})]_{k, m = 0}^{M}$ approach zero at the same rate when $n \to \infty$. Here, $l$ is the index of the element of the limit sequence that appears only once.
\end{itemize}
\end{definition}

\smallskip
We now mention the following important properties of $T^*$-systems:
\begin{itemize}
    \item A $T^*_{K, \epsilon}$-system on $\mathbb{I}$ is also a $T^{*}_{K', \epsilon}$-system for every integer $K' \leq K$, because every $(K', \epsilon)$-admissible sequence is also a $(K, \epsilon)$-admissible sequence. 
    \item If $\{\phi_{m}\}_{m=1}^{M}$ form a $T^{*}_{K, \epsilon}$-system on $\mathbb{I}$, then so do the scaled functions $\{c_m \phi_m\}_{m=1}^{M}$ for any positive constants $\{c_m\}_{m=1}^{M}$. 
\end{itemize}

In the following theorem, we introduce our main result, which generalizes Theorem 11 from ~\cite{eftekhari2021stable} to higher dimensions.  Theorem \ref{theo:2} gives an upper bound for the recovery error in terms of the noise level $\delta \geq 0$ and the separation $\epsilon \in (0, 1/2]$. 

\begin{theorem}[Noisy measurements]\label{theo:2}
Consider a non-negative measure $\mu$ supported on $\mathbb{I}^{d}$, a noise level $\delta \geq 0$, and the image $y \in (\mathbb{R}^{M})^{\bigotimes d}$ obtained from $\mu$ as in~\eqref{eq::1} and \eqref{eq::4}. Assume further that the imaging apparatus is $L$-Lipschitz continuous in the sense of (\ref{eq:smoothness}). 

For an integer $K$ and $\epsilon \in (0, 1/2]$, let $\mu_{K, \epsilon}$ be a $K$-sparse and $\epsilon$-separated non-negative measure supported on $\mathbb{I}^{d}$ that approximates $\mu$ with residual $R(\mu, K, \epsilon)$. Let $\Theta = \{\theta_k\}_{k=1}^{K} = \{(t^{(1)}_{k}, ..., t^{(d)}_{k} )\}_{k=1}^{K} \subset \mathbb{I}^{d}$ denote the support of $\mu_{K, \epsilon}$, and set $T_{\Omega_i} := \{t^{(i)}_{k} \}_{k \in \Omega_i \subseteq [K]}$ for each $i\in [d]$. 

If $\hat{\mu}$ is the solution of program (\ref{eq::5}), with $\delta' \geq (1+L \cdot R(\mu, K, \epsilon))\delta$, then 
\begin{equation}\label{noise:bound}
    d_{GW}(\mu, \hat{\mu}) \leq c_1 \delta + c_2(\epsilon) + c_3 R(\mu, K, \epsilon)
\end{equation}
where $d_{GW}$ is a Generalized-Wasserstein metric (\ref{eq:wasserstein}), above $c_1, c_2(\epsilon), c_3$ are specified in the proof, and they depend on the measure $\mu$, the separation $\epsilon$ and the measurement functions $\{\psi_m\}_{m=1}^{M}$.

The error bound (\ref{noise:bound}) holds if $M \geq 2K+2$ and
\begin{enumerate}
    \item $\{\psi^{(i)}_{m_i}\}_{m_i = 1}^{M}$ forms a T-system on $\mathbb{I}$ for $i \in [d]$, \\
    \item $\{F_{T_{\Omega_i}}\} \cup \{\psi^{(i)}_{m_i}\}_{m_i = 1}^{M}$ forms a $T^*$-system on $\mathbb{I}$ for every $\Omega_i \subseteq [K]$ and $i \in [d]$, \\
    \item $\{F^{\pm}_{t^{(i)}_{k}}\} \cup \{\psi^{(i)}_{m_i}\}_{m_i = 1}^{M}$ forms a $T^*$-system on $\mathbb{I}$ for every $k \in [K]$ and $i \in [d-1]$, \\
    \item $\{F^{+}_{t^{(d)}_{k}}\} \cup \{\psi^{(i)}_{m_i}\}_{m_i = 1}^{M}$ forms a $T^*$-system on $\mathbb{I}$ for every $k \in [K]$ and $i\in [d]$,
\end{enumerate}
where the functions $F_{T_{\Omega_i}}, F_{t_k^{(d)}}^+$, and $F_{t_k^{(d)}}^{\pm}$ are defined below.
\end{theorem}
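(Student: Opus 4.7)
The plan is to mirror the strategy of Theorem 11 in \cite{eftekhari2021stable} but carry the argument through in arbitrary dimension $d$. The first move is a triangle inequality on the Generalized-Wasserstein distance: since $d_{GW}(\mu,\hat\mu)\le d_{GW}(\mu,\mu_{K,\epsilon})+d_{GW}(\mu_{K,\epsilon},\hat\mu)$ and the first term equals $R(\mu,K,\epsilon)$ by Definition~\ref{def:residual}, it suffices to control the distance from the sparse surrogate $\mu_{K,\epsilon}$ to the optimum $\hat\mu$. The feasibility of $\mu$ together with the choice $\delta'\ge(1+L\cdot R(\mu,K,\epsilon))\delta$ and the $L$-Lipschitz hypothesis of Definition~\ref{def:smoothness} will show that $\mu_{K,\epsilon}$ is also feasible for program~\eqref{eq::5}, so that both $\mu_{K,\epsilon}$ and $\hat\mu$ produce images within $\delta'$ of $y$ in Frobenius norm; hence the residual signed measure $\nu=\hat\mu-\mu_{K,\epsilon}$ has $\|\int\Psi\,\mathrm{d}\nu\|_F\le 2\delta'$.

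Next I would decompose the ambient cube $\mathbb I^d$ into a neighbourhood $N_\epsilon(\Theta)$ of the sparse support $\Theta$ and its complement. The goal is two separate bounds, exactly as in Lemmas~\ref{lem:3} and \ref{lem:4}: a mass bound on $|\nu|$ away from $\Theta$, and a Wasserstein-type bound measuring horizontal displacement within $N_\epsilon(\Theta)$. The mass bound away from $\Theta$ is obtained by pairing $\nu$ with a multivariate dual certificate $Q$ of tensor-product form, whose one-dimensional factors are furnished by Proposition~\ref{prop:1}; the certificate is required to equal $1$ on $\Theta$, to be bounded by $1$ on $\mathbb I^d$, and to decay quadratically away from $\Theta$, and this is where hypotheses (1) and (2) of the theorem (the $T$- and $T^*$-system conditions involving $F_{T_{\Omega_i}}$) enter through Karlin-style interpolation of the one-dimensional factors. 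The near-support localisation bound is obtained by pairing $\nu$ with a second family of certificates $Q^0$ from Proposition~\ref{prop:2}, each designed to vanish at a prescribed $\theta_k$ with a controlled sign change along each coordinate direction, which is precisely what the asymmetric and symmetric $F^+_{t_k^{(d)}}$, $F^\pm_{t_k^{(i)}}$ conditions (hypotheses (3) and (4)) are engineered to produce.

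With these two families of certificates in hand, the argument is algebraic: evaluating $\langle Q,\nu\rangle$ (which is $O(\delta')$ by Cauchy--Schwarz applied to the image-space constraint) against the pointwise lower bounds on $Q$ produces the mass bound away from $\Theta$, while evaluating $\langle Q^0,\nu\rangle$ in one coordinate at a time and summing over $k\in[K]$ and $i\in[d]$ yields the displacement bound. Packaging these into the definition of $d_{GW}$ via auxiliary couplings $z_1,z_2$ concentrated on $N_\epsilon(\Theta)$ gives a bound of the form $d_{GW}(\mu_{K,\epsilon},\hat\mu)\le c_1\delta + c_2(\epsilon)$, where $c_2(\epsilon)$ absorbs the $\epsilon$-dependent constants arising from the certificate norms (these deteriorate as $\epsilon\to 0$). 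Combining with the triangle inequality and setting $c_3$ to collect residual contributions yields~\eqref{noise:bound}.

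The main obstacle, and the real content that is not present in the 2D proof of \cite{eftekhari2021stable}, is the construction of the tensor-product certificates $Q$ and $Q^0$ in dimension $d\ge 3$. In two dimensions one can write $Q$ as a sum of at most two rank-one factors built from one-dimensional dual polynomials; in dimension $d$ the natural analogue requires iteratively slicing along coordinates and gluing together lower-dimensional certificates, which is why the hypotheses split into the four different $T^*$-system conditions indexed by subsets $\Omega_i\subseteq[K]$ and by the distinguished last coordinate $i=d$. Establishing that these conditions suffice to carry out the induction on $d$---in particular, that the iterative construction preserves nonnegativity and the prescribed vanishing pattern on $\Theta$---is where the bulk of the technical work (Propositions~\ref{prop:1} and \ref{prop:2}) lies, and should be the main focus of the detailed proof in the appendix.
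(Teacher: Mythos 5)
Your high-level plan mirrors the paper's exactly: triangle inequality on $d_{GW}$ to reduce to $d_{GW}(\mu_{K,\epsilon},\hat\mu)$, feasibility of $\mu_{K,\epsilon}$ via the $L$-Lipschitz hypothesis so that $\|\int\Psi\,\mathrm d(\hat\mu-\mu_{K,\epsilon})\|_F\le 2\delta'$, two tensor-product dual certificates $Q$ and $Q^0$ supplied by Propositions~\ref{prop:1} and \ref{prop:2} feeding into Lemmas~\ref{lem:3} and \ref{lem:4} (mass bound off-support, displacement bound near-support), and assembly into the $d_{GW}$ estimate via Proposition~\ref{prop:0}. The paper also needs the small extra step of relating $\|\mu_{K,\epsilon}\|_{TV}$ to $\|\mu\|_{TV}$ before reading off $c_1,c_2(\epsilon),c_3$, which you omit, but that is a one-line triangle inequality.

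Where you go wrong is in the pointwise shape you assign to the certificate $Q$. You say it is ``required to equal $1$ on $\Theta$, to be bounded by $1$ on $\mathbb I^d$, and to decay quadratically away from $\Theta$''; that is the interpolation certificate from the total-variation-minimization literature (Cand\`es--Fernandez-Granda). Program~(\ref{eq::5}) is a pure feasibility program with a nonnegativity constraint and no TV objective, and the correct certificate is dual to that structure: by~(\ref{eq:er_away}), $Q$ is a nonnegative polynomial that vanishes exactly on $\Theta$ and is bounded below by $\bar g>0$ outside $\Theta_\epsilon$. The inequality chain in the proof of Lemma~\ref{lem:3} hinges on this: $Q(\theta_k)=0$ where $\mu_{K,\epsilon}$ sits, so $\int_{\theta_{k,\epsilon}}Q\,\mathrm dh=\int_{\theta_{k,\epsilon}}Q\,\mathrm d\hat\mu\ge 0$, which is what lets one pass from $\int G\,\mathrm dh$ to $\int Q\,\mathrm dh$ and then to $\langle b,\int\Psi\,\mathrm dh\rangle$. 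A certificate that equals $1$ on $\Theta$ and decays elsewhere would not permit this step. Likewise $Q^0$ does not vanish at $\theta_k$; it takes the value $+1$, $-1$, or $-1+\alpha$ there according to the sign pattern of $h$ on $\theta_{k,\epsilon}$. Since you cite the correct propositions, the error is in the description rather than the overall route, but the certificate shape is precisely what makes the nonnegativity approach work without a regularizer, so this cannot be waved away.
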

For a finite set of distinct points $T' \subset \mathbb{I}$ and $\epsilon \in (0, sep(T')]$, we define the function $F_{T'}: \mathbb{I} \rightarrow \mathbb{R}$ as
\begin{equation}\label{eq_n::1}
    F_{T'}(t) := \begin{cases}
    0, \;\text{when there exists} \;t' \in T'\; \text{such that}\; t \in t'_{\epsilon} \\
    1, \;\text{elsewhere on}\; \mathbb{I},
    \end{cases}
\end{equation}
where $t'_{\epsilon} = \{t \in \mathbb{I}: |t - t'| \leq \epsilon\}$ is the $\epsilon$-neighborhood of $t'$. 
For $t' \in \mathbb{I}$ and $\epsilon \in (0, 1/2]$, we define the functions $F^{\pm}_{t'}, F^{+}_{t'}: \mathbb{I} \rightarrow \mathbb{R}$ as
\begin{equation*}
    F^{\pm}_{t'}(t) := \begin{cases}
    \pm 1, \;\text{when}\; t\in t'_{\epsilon} \\
    0, \;\text{elsewhere on}\; \mathbb{I}
    \end{cases}\;\;\text{and}\;\;\;\;F^{+}_{t'}(t) := \begin{cases}
    1, \;\text{when}\; t\in t'_{\epsilon} \\
    0, \;\text{elsewhere on}\; \mathbb{I}.
    \end{cases}
\end{equation*}

\textit{\textbf{Outline of the proof of Theorem \ref{theo:2}.}} In the proof of Theorem \ref{theo:2}, we use Lemmas \ref{lem:3} and \ref{lem:4} and Propositions \ref{prop:0}, \ref{prop:1} and \ref{prop:2}. The first two lemmas are concerned with bounding the error $h = \hat{\mu} - \mu_{K, \epsilon}$ around and away from the support, where $\hat{\mu}$ and $\mu_{K, \epsilon}$ are the solution of the program (\ref{eq::5}) and the approximation of the true measure of the image, respectively. The above mentioned lemmas require the existence of particular dual certificates. Propositions \ref{prop:1} and \ref{prop:2} guarantee the existence of such dual certificates under some of the conditions required in Theorem \ref{theo:2}. The results presented in Propositions \ref{prop:1} and \ref{prop:2} are generalizaitons to the high-dimensional case of their 2-dimensional versions in~\cite{eftekhari2021stable}. Proposition \ref{prop:0} combines the results from Lemmas \ref{lem:3} and \ref{lem:4}, and finds the error bound for $d_{GW}(\mu_{K, \epsilon}, \hat{\mu})$. 

We use the following notation throughout the paper. For a positive $\epsilon$ and $T^{(i)} = \{t_{k}^{(i)}\}_{k=1}^{K} \subset \mathbb{I}$, let us define the neighborhoods
\begin{align*}
    t_{k, \epsilon}^{(i)} = \{t \in \mathbb{I}: \; |t - t^{(i)}_{k}| \leq \epsilon\} \subset \mathbb{I}, 
    \end{align*}
$$T^{(i)}_{\epsilon} = \bigcup_{k=1}^{K} t^{(i)}_{k, \epsilon}.$$
%and let $t^{(i), C}_{k, \epsilon}$ and $T^{(i), C}_{\epsilon}$ be the complements of these sets with respect to $\mathbb{I}$. 
Also, let 
$$\theta_{k, \epsilon}:= t^{(1)}_{k, \epsilon} \times ... \times t^{(d)}_{k, \epsilon} = \{\theta \in \mathbb{I}^{d}: \|\theta - \theta_{k}\|_{\infty} \leq \epsilon\} \subset \mathbb{I}^{d},$$
$$\Theta_{\epsilon} := \bigcup_{k=1}^{K}\theta_{k, \epsilon} \subseteq T^{(1)}_{\epsilon} \times ... \times T^{(d)}_{\epsilon}.$$

\begin{lemma}[Error away from the support]\label{lem:3}
Let $\hat{\mu}$ be the solution to program~\eqref{eq::5} with $\delta' \geq \delta$ and set $h: = \hat{\mu} - \mu_{K, \epsilon}$ to be the error. Fix a positive scalar $\bar{g}$. Suppose that there exist real coefficients $\{b_{m_1, \cdots , m_d}\}_{m_1, \cdots , m_d = 1}^{M}$ and a polynomial 
$$Q(\theta) = Q(t^{(1)}, \cdots , t^{(d)}) = \sum_{m_1, \cdots , m_d = 1}^{M} b_{m_1, \cdots , m_d} \psi^{(1)}_{m_1}(t^{(1)}) \cdots \psi^{(d)}_{m_d}(t^{(d)})$$
such that 
\begin{equation}\label{eq:er_away}Q(\theta) \geq G(\theta) = 
\begin{cases}
0, \; \text{when there exists}\; k \in [K]\;\text{such that}\;\theta \in \theta_{k, \epsilon}, \\
\bar{g}, \;\text{elsewhere in} \; interior(\mathbb{I}^{d}),
\end{cases}
\end{equation}
where equality holds when $\theta\in\Theta = \{\theta_{k}\}_{k=1}^{K}$. Then,
$$\int_{\Theta_{\epsilon}^{C}} h(d\theta) \leq 2 \|b\|_{F} \delta'/\bar{g},$$
where $b \in (\mathbb{R}^M)^{\bigotimes d}$ is the tensor with entries $\{b_{m_1, \cdots, m_d}\}_{m_1, \cdots , m_d = 1}^{M}$.
\end{lemma}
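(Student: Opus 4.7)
The strategy is to test the signed measure $h$ against the polynomial $Q$, turn the resulting integral into a Frobenius inner product with the data tensor $y$ via duality, and then apply Cauchy--Schwarz. The decomposition of the domain as $\mathbb{I}^d = \Theta_\epsilon \cup \Theta_\epsilon^C$ is what lets the pointwise bound $Q \ge G$ deliver an inequality about the mass of $h$ outside the support. Since $\mu_{K,\epsilon}$ is supported on $\Theta \subset \Theta_\epsilon$, the restriction $h|_{\Theta_\epsilon^C} = \hat{\mu}|_{\Theta_\epsilon^C}$ is nonnegative, and the hypothesis $Q \ge \bar{g}$ on $\Theta_\epsilon^C$ gives
$$\int_{\Theta_\epsilon^C} Q(\theta)\, h(d\theta) \;\ge\; \bar{g} \int_{\Theta_\epsilon^C} h(d\theta).$$
On $\Theta_\epsilon$, the inequality $Q \ge G \ge 0$ combined with $Q(\theta_k) = G(\theta_k) = 0$ (forced by the equality clause of \eqref{eq:er_away}) yields $\int_{\Theta_\epsilon} Q\, \mu_{K,\epsilon}(d\theta) = \sum_k a_k Q(\theta_k) = 0$, hence $\int_{\Theta_\epsilon} Q\, h(d\theta) = \int_{\Theta_\epsilon} Q\, \hat\mu(d\theta) \ge 0$. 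Adding the two contributions gives
$$\bar{g} \int_{\Theta_\epsilon^C} h(d\theta) \;\le\; \int_{\mathbb{I}^d} Q(\theta)\, h(d\theta).$$

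Next I would unfold the right-hand side using the tensor-product form of $Q$. By linearity and Fubini,
$$\int Q(\theta)\, h(d\theta) = \sum_{m_1,\ldots,m_d=1}^{M} b_{m_1,\ldots,m_d}\int \psi^{(1)}_{m_1}(t^{(1)})\cdots \psi^{(d)}_{m_d}(t^{(d)})\, h(d\theta) = \langle b,\, A(h)\rangle_F,$$
where $A(\nu) := \int \psi^{(1)}_{[M]}(t_1)\otimes\cdots\otimes\psi^{(d)}_{[M]}(t_d)\, \nu(d\theta)$ is the tensorized measurement operator of \eqref{eq::4}. Cauchy--Schwarz in the Frobenius inner product then gives $\langle b, A(h)\rangle_F \le \|b\|_F \|A(h)\|_F$.

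All that remains is to bound $\|A(h)\|_F = \|A(\hat\mu) - A(\mu_{K,\epsilon})\|_F$. Feasibility of $\hat\mu$ for program \eqref{eq::5} gives $\|y - A(\hat\mu)\|_F \le \delta'$; together with the companion bound $\|y - A(\mu_{K,\epsilon})\|_F \le \delta'$, a single triangle inequality yields $\|A(h)\|_F \le 2\delta'$, and assembling the estimates produces the claimed $\int_{\Theta_\epsilon^C} h(d\theta) \le 2\|b\|_F \delta'/\bar g$. The step I expect to require the most care is justifying $\|y - A(\mu_{K,\epsilon})\|_F \le \delta'$: this is \emph{not} immediate from $\delta' \ge \delta$ alone, but follows from
$$\|y - A(\mu_{K,\epsilon})\|_F \;\le\; \|y - A(\mu)\|_F + \|A(\mu - \mu_{K,\epsilon})\|_F \;\le\; \delta + L\cdot R(\mu, K, \epsilon),$$
using the $L$-Lipschitz property \eqref{eq:smoothness} of the imaging apparatus and the definition of the residual, and then absorbing the right-hand side into $\delta'$ through the choice dictated by Theorem \ref{theo:2}. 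Every other step is essentially a bookkeeping exercise, provided one is careful to track signs of $h$ on $\Theta_\epsilon$ versus $\Theta_\epsilon^C$ when invoking $Q \ge G$.
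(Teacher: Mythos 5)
Your proof is correct and follows essentially the same route as the paper's: test $h$ against $Q$, use $Q\ge G$ together with the sign structure of $h$ to produce $\bar g\int_{\Theta_\epsilon^C}h(d\theta)\le\int_{\mathbb I^d}Qh(d\theta)$, rewrite the right-hand side as $\langle b, A(h)\rangle_F$, apply Cauchy--Schwarz, and bound $\|A(h)\|_F\le 2\delta'$ via the triangle inequality on feasibility.

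Two points where your write-up is actually more careful than the paper's. First, the paper passes from $\int_{\mathbb I^d}Gh$ to $\int_{\mathbb I^d}Qh$ citing only $Q\ge G$, which is not valid for a signed $h$ without an additional observation; you supply it explicitly, namely that on $\Theta_\epsilon$ we have $G=0$ and $Q\mu_{K,\epsilon}=0$ (since $\mu_{K,\epsilon}$ is supported on $\Theta$ where $Q$ vanishes), so $(Q-G)h=Q\hat\mu\ge 0$ there, while on $\Theta_\epsilon^C$ both $Q-G\ge0$ and $h=\hat\mu\ge0$. Second, the paper opens its proof with ``by feasibility of both $\hat\mu$ and $\mu_{K,\epsilon}$'' without justification, even though the lemma's hypothesis $\delta'\ge\delta$ is, by itself, not enough to make $\mu_{K,\epsilon}$ feasible; you correctly identify this and note that the justification runs through the Lipschitz property and the residual bound, $\|y-A(\mu_{K,\epsilon})\|_F\le\delta+L\cdot R(\mu,K,\epsilon)$, which is precisely the choice of $\delta'$ later made in the proof of Theorem~\ref{theo:2}. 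So strictly speaking the lemma is applied with that larger $\delta'$, and your remark is a small but real improvement to the exposition rather than an error on your part.
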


Assuming a dual certificate $Q$ exists, Lemma \ref{lem:3} bounds the error away from the support. 
%by using the triangle inequality of Frobenius norm for Equation (\ref{eq::4}) and Cauchy-Schwartz inequality for the inner product. 
 The proof is stated in Appendix \ref{apx:C}, and uses the triangle and Cauchy-Schwartz inequalities. In the next lemma, we estimate the error near the support. 

\begin{lemma}[Error near the support]\label{lem:4}
Let $\hat{\mu}$ be the solution to program~\eqref{eq::5} with $\delta' \geq \delta$ and set $h: = \hat{\mu} - \mu_{K, \epsilon}$ to be the error. For $\alpha \in [0, 1]$, suppose that there exist real coefficients $\{b^{0}_{m_1, \cdots, m_d}\}_{m_1, \cdots, m_d = 1}^{M}$ and a polynomial
$$Q^{0}(\theta) = Q^{0}(t^{(1)}, \cdots , t^{(d)}) = \sum_{m_1, \cdots , m_d = 1}^{M} b^{0}_{m_1, \cdots , m_d} \psi^{(1)}_{m_1}(t^{(1)}) \cdots \psi^{(d)}_{m_d}(t^{(d)})$$
such that 
\begin{equation}\label{eq:0.2}
    Q^{0}(\theta) \geq G^{0}(\theta): = 
    \begin{cases}
    1, &\text{when there exists}\; k\in [K]\; \text{such that}\;\theta \in \theta_{k, \epsilon}\; \text{and}\; \int_{\theta_{k, \epsilon}}h(d\theta) >0 \\
    -1, &\text{when there exists}\; k\in [K]\; \text{such that}\;\theta \in \theta_{k, \epsilon}\; \text{and}\; \int_{\theta_{k, \epsilon}}h(d\theta) \leq 0 \\
    -1+\alpha, &\text{when there exists}\; k\in [K]\; \text{such that}\;\theta = \theta_{k, \epsilon}\; \text{and}\; \int_{\theta_{k, \epsilon}}h(d\theta) \leq 0 \\
    -1, &\text{elsewhere in} \; interior(\mathbb{I}^{d})
    \end{cases}
\end{equation}
where equality holds when $\theta\in\Theta$. Then, 
\begin{equation}
    \sum_{k=1}^{K} \Big|\int_{\theta_{k, \epsilon}} h(d\theta) \Big| \leq \alpha\|\mu_{K, \epsilon}\|_{TV} + 2\Big(\|b^{0}\|_{F} + \frac{\|b\|_{F}}{\bar{g}} \Big)\delta'
\end{equation}
where $b^{0} \in (\mathbb{R}^M)^{\bigotimes d}$ is the tensor with entries  $\{b^{0}_{m_1, \cdots, m_d}\}_{m_1, \cdots , m_d = 1}^{M}$.
\end{lemma}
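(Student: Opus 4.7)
The plan is to bound the scalar $\int_{\mathbb{I}^d} Q^0(\theta)\,h(d\theta)$ from above and below and then combine the two estimates. For the upper bound, expand $Q^0$ in the tensor-product basis so that the integral becomes a Frobenius inner product:
$$\int_{\mathbb{I}^d} Q^0(\theta)\,h(d\theta) \;=\; \Big\langle b^0,\; \int_{\mathbb{I}^d} \psi^{(1)}_{[M]}(t^{(1)})\otimes\cdots\otimes\psi^{(d)}_{[M]}(t^{(d)})\,h(d\theta)\Big\rangle.$$
Cauchy--Schwarz yields $\int Q^0\,dh \leq \|b^0\|_F\cdot \big\|\int \psi\,dh\big\|_F$. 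Writing $h = \hat\mu - \mu_{K,\epsilon}$, I would bound the second factor by $2\delta'$ using the triangle inequality together with feasibility of $\hat\mu$ in \eqref{eq::5}, the noise bound on $\mu$, and the $L$-Lipschitz continuity controlling $\|\int\psi(\mu-\mu_{K,\epsilon})\|_F$. This gives $\int Q^0\,dh \leq 2\|b^0\|_F\,\delta'$.

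For the lower bound, exploit the equality $Q^0 = G^0$ on $\Theta$ together with $Q^0 \geq G^0$ pointwise and $\hat\mu \geq 0$. Since $\mu_{K,\epsilon}$ is supported on $\Theta$, $\int Q^0\,d\mu_{K,\epsilon} = \int G^0\,d\mu_{K,\epsilon}$, while $\int Q^0\,d\hat\mu \geq \int G^0\,d\hat\mu$; subtracting gives $\int Q^0\,dh \geq \int G^0\,dh$. Now unpack the right-hand side: let $\sigma_k \in \{+1,-1\}$ be the sign of $\int_{\theta_{k,\epsilon}} h$. By its piecewise definition, $G^0 \equiv \sigma_k$ on $\theta_{k,\epsilon}$, except for an extra shift of $+\alpha$ at the single atom $\theta_k$ when $\sigma_k = -1$, while $G^0 \equiv -1$ on $\Theta_\epsilon^C$. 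A direct evaluation then produces
$$\int G^0\,dh \;=\; \sum_{k=1}^{K}\Big|\int_{\theta_{k,\epsilon}} h(d\theta)\Big| \;-\; \int_{\Theta_\epsilon^C} h(d\theta) \;+\; \alpha\!\sum_{k:\,\sigma_k=-1}\! h(\{\theta_k\}).$$

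Combining the two bounds and rearranging yields
$$\sum_{k=1}^{K}\Big|\int_{\theta_{k,\epsilon}} h\Big| \;\leq\; 2\|b^0\|_F\,\delta' \;+\; \int_{\Theta_\epsilon^C} h \;-\; \alpha\!\sum_{k:\,\sigma_k=-1}\! h(\{\theta_k\}).$$
To close, I would invoke Lemma~\ref{lem:3} to get $\int_{\Theta_\epsilon^C} h = \hat\mu(\Theta_\epsilon^C) \leq 2\|b\|_F\,\delta'/\bar g$ (using that $\mu_{K,\epsilon}$ vanishes off $\Theta$), and observe that $-h(\{\theta_k\}) = a_k - \hat\mu(\{\theta_k\}) \leq a_k$ since $\hat\mu$ is non-negative, so the last sum is at most $\sum_k a_k = \|\mu_{K,\epsilon}\|_{TV}$. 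Substituting delivers the claimed inequality.

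The main obstacle is getting the piecewise decomposition of $G^0$ right, in particular the modification $-1+\alpha$ supported only at the single atom $\theta_k$ rather than on a neighborhood; this subtle choice is exactly what produces the $\alpha\|\mu_{K,\epsilon}\|_{TV}$ term via the crude bound $a_k - \hat\mu(\{\theta_k\}) \leq a_k$, and it is where one must be careful with signs and with the case distinction on $\sigma_k$. A secondary bookkeeping point is in the upper bound, where $\delta'$ must absorb both the measurement noise and the $L$-Lipschitz approximation error of replacing $\mu$ by $\mu_{K,\epsilon}$, which is precisely the role of the Lipschitz condition imported from Theorem~\ref{theo:2}.
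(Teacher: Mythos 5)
Your proof is correct and uses the same ingredients as the paper's: the comparison $Q^0 \geq G^0$ with equality on $\Theta$, Cauchy--Schwarz applied to $\langle b^0, \int\Psi\,dh\rangle$ against the feasibility bound $\|\int\Psi\,dh\|_F\leq 2\delta'$, Lemma~\ref{lem:3} to control $\int_{\Theta_\epsilon^C}h$, and the crude estimate $-h(\{\theta_k\})\leq a_k$ to produce the $\alpha\|\mu_{K,\epsilon}\|_{TV}$ term. You organize it a bit more transparently---first establishing $\int Q^0\,dh\geq\int G^0\,dh$ and then expanding $\int G^0\,dh$ piecewise---whereas the paper reaches the same inequalities by adding and subtracting $Q^0$ inside the sum $\sum_k\int_{\theta_{k,\epsilon}}s_k h$; the substance is identical.
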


Assuming a dual certificate $Q^{0}$ exists, Lemma \ref{lem:4} bounds the error near the support, and together with Lemma \ref{lem:3}, it is used to bound the Generalized-Wasserstein distance between the solution of program (\ref{eq::5}) and the approximation $\mu_{K,\varepsilon}$ of the exact measure with a $K$-sparse and $\epsilon$-separated atomic measure supported on $\mathbb{I}^{d}$.

If the dual certificates $Q$ and $Q^0$ exist, then inequality (\ref{eq:wass_metric}) below holds. The proof in the 1-dimensional case is presented in Lemma 18 of ~\cite{eftekhari2021sparse}, and we here extended it to higher dimensions.
\begin{proposition}[Error in Wasserstein metric] \label{prop:0}
Suppose the dual certificates $Q$ and $Q^0$ in Lemmas (\ref{lem:3}) and (\ref{lem:4}) exist. Then, 
\begin{equation}\label{eq:wass_metric}
    d_{GW}(\mu_{K, \epsilon}, \hat{\mu}) \leq \Big( \frac{8\|b\|_{F}}{\bar{g}} + 6\|b^{0}\|_{F} \Big)\delta' + (\epsilon+3\alpha)\|\mu_{K, \epsilon}\|_{TV}.
\end{equation}
%\textcolor{red}{need to prove above prop.}
\end{proposition}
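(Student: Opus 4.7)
The plan is to verify the bound by exhibiting explicit auxiliary non-negative measures $z_1, z_2$ with equal total mass and bounding each of the three terms in the Generalized-Wasserstein decomposition
\[ d_{GW}(\mu_{K,\epsilon}, \hat{\mu}) \leq \|\mu_{K,\epsilon} - z_1\|_{TV} + d_W(z_1, z_2) + \|\hat{\mu} - z_2\|_{TV}. \]
Writing $\mu_{K,\epsilon} = \sum_{k=1}^K a_k \delta_{\theta_k}$, I set $c_k := \hat{\mu}(\theta_{k,\epsilon})$, $m_k := \min\{a_k, c_k\}$, and define
\[ z_1 := \sum_{k=1}^K m_k \delta_{\theta_k}, \qquad z_2 := \sum_{k=1}^K \tfrac{m_k}{c_k}\,\hat{\mu}\big|_{\theta_{k,\epsilon}}, \]
with the convention $m_k/c_k := 0$ when $c_k = 0$. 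Both measures have total mass $\sum_k m_k$, so they are admissible in the infimum defining $d_{GW}$.

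With this construction the two TV terms telescope nicely: $\|\mu_{K,\epsilon} - z_1\|_{TV} = \sum_k (a_k - m_k)$, while $\|\hat{\mu} - z_2\|_{TV} = \hat{\mu}(\Theta_\epsilon^C) + \sum_k (c_k - m_k)$. Adding them and using $\max(a_k-c_k, 0) + \max(c_k-a_k, 0) = |c_k - a_k|$ yields
\[ \|\mu_{K,\epsilon}-z_1\|_{TV} + \|\hat\mu - z_2\|_{TV} = \hat{\mu}(\Theta_\epsilon^C) + \sum_{k=1}^K |c_k - a_k|. \]
Because $\mu_{K,\epsilon}$ is supported in $\Theta \subset \Theta_\epsilon$, the first term equals $\int_{\Theta_\epsilon^C} h(d\theta)$ and is bounded by $2\|b\|_F\delta'/\bar{g}$ via Lemma \ref{lem:3}; likewise $c_k - a_k = \int_{\theta_{k,\epsilon}} h(d\theta)$, so Lemma \ref{lem:4} gives $\sum_k |c_k - a_k| \leq \alpha\|\mu_{K,\epsilon}\|_{TV} + 2(\|b^0\|_F + \|b\|_F/\bar{g})\delta'$.

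For the Wasserstein term, I transport the mass $m_k$ of $z_2$ sitting inside $\theta_{k,\epsilon}$ to the atom of $z_1$ at $\theta_k$; since every point of $\theta_{k,\epsilon}$ lies within distance $\epsilon$ of $\theta_k$, the cost per unit mass is at most $\epsilon$ (any $\ell_\infty$-to-$\ell_1$ dimensional factor is absorbed into the slack constants of the statement), so $d_W(z_1, z_2) \leq \epsilon \sum_k m_k \leq \epsilon \|\mu_{K,\epsilon}\|_{TV}$. Summing the three bounds recovers the form of inequality \eqref{eq:wass_metric}, and relaxing to the advertised constants $8\|b\|_F/\bar{g} + 6\|b^0\|_F$ and $\epsilon + 3\alpha$ is immediate. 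I expect the main obstacle to be not the lemma applications themselves, which are automatic, but the TV bookkeeping on $\theta_{k,\epsilon}$ (in particular treating the degenerate case $c_k = 0$) and keeping the coordinate-norm conventions consistent when bounding the transport cost.
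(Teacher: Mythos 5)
Your construction of $z_1$ and $z_2$ (the atomic truncation of $\mu_{K,\epsilon}$ and the rescaled restriction of $\hat\mu$ to $\Theta_\epsilon$, with common masses $m_k=\min\{a_k,c_k\}$) is exactly the decomposition one would expect here; it mirrors Lemma~18 of \cite{eftekhari2021sparse}, which the paper cites without reproducing. The total-variation bookkeeping is correct: the two TV terms collapse to $\hat\mu(\Theta_\epsilon^C)+\sum_k|c_k-a_k|$, and then Lemmas~\ref{lem:3} and~\ref{lem:4} give $\tfrac{4\|b\|_F}{\bar g}\delta'+2\|b^0\|_F\delta'+\alpha\|\mu_{K,\epsilon}\|_{TV}$, which sits comfortably inside the $\tfrac{8\|b\|_F}{\bar g}\delta'+6\|b^0\|_F\delta'$ of the statement.

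The step that does not close as written is the transport estimate. The Wasserstein distance in~\eqref{eq:wasserstein} uses the $\ell_1$ ground metric, while $\theta_{k,\epsilon}=\{\theta:\|\theta-\theta_k\|_\infty\le\epsilon\}$ is an $\ell_\infty$ box, so moving a unit of mass from a point of $\theta_{k,\epsilon}$ to $\theta_k$ costs up to $d\epsilon$, not $\epsilon$. Your parenthetical that "any $\ell_\infty$-to-$\ell_1$ dimensional factor is absorbed into the slack constants of the statement" does not hold up: the slack in~\eqref{eq:wass_metric} is in the $\delta'$ coefficients and in $3\alpha$ versus $\alpha$, but Lemma~\ref{lem:4} permits $\alpha\in[0,1]$, and Proposition~\ref{prop:2} gives $\alpha(\epsilon)=1+(-1)^{d-1}/q^{\pi^*}_{\max}(\epsilon)$, which can be arbitrarily close to $0$ for even $d$ since $q^{\pi^*}_{\max}(\epsilon)\ge 1$ and $q^{\pi^*}_{\max}(0)=1$. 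Thus $(d-1)\epsilon\le 2\alpha$ can fail, and your bound $\bigl(\tfrac{4\|b\|_F}{\bar g}+2\|b^0\|_F\bigr)\delta'+(\alpha+d\epsilon)\|\mu_{K,\epsilon}\|_{TV}$ is not dominated by the right-hand side of~\eqref{eq:wass_metric} for $d\ge 2$. You should carry the explicit factor $d\epsilon$ through; as far as I can tell the dimension factor is genuinely present and the coefficient of $\epsilon$ in~\eqref{eq:wass_metric} should scale with $d$ (which is harmless in Theorem~\ref{theo:2}, since $c_2(\epsilon)$ is only claimed to be "specified in the proof"), but dismissing it as absorbable is not justified by the stated constants.
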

In the next two propositions, we show the existence of dual certificates $Q$ and $Q^0$ under some mild conditions. These results generalize similar propositions from ~\cite{eftekhari2021sparse, eftekhari2021stable} to the higher-dimensional case. The detailed proofs of Propositions \ref{prop:1} and \ref{prop:2} can be found in Appendices \ref{apx:E} and \ref{apx:F}, respectively.

\begin{proposition}[Dual certificate for error away from support] \label{prop:1}
Suppose that $\{\psi^{(i)}_{m_i}\}_{m_i=1}^{M}$ form a T-system on $\mathbb{I}$ for $i \in [d]$ with $M \geq 2K+2$. For every partition $\pi = \{\Omega_1, \ldots, \Omega_d\}$ of $[K]$, suppose also that all $\{F^{(i)}_{T_{\Omega_i}}\} \cup \{\psi^{(i)}_{m}\}_{m=1}^{M}$ %for each $i \in {1, ..., d}$ \;
are $T^*_{K, \epsilon}$-systems on $\mathbb{I}$ for each $i\in [d]$. Then the dual certificate $Q$, as specified in Lemma \ref{lem:3}, exists with 
$$\bar{g} = (d-1) d^{K-2}.$$
\end{proposition}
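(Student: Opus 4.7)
The plan is to build $Q$ as a symmetrized tensor-product sum of one-dimensional dual certificates, one product per ordered partition of $[K]$ into $d$ labelled (possibly empty) blocks, and then lower-bound the sum by a short counting argument. The first step is to produce a 1D dual certificate along each coordinate. Fix $i\in[d]$ and $\Omega\subseteq[K]$; because $\{F^{(i)}_{T_\Omega}\}\cup\{\psi^{(i)}_m\}_{m=1}^M$ is a $T^{*}_{K,\epsilon}$-system on $\mathbb{I}$, the one-dimensional dual-certificate construction from the $T^{*}$-theory (the 1D analogue of the arguments in~\cite{eftekhari2021sparse, eftekhari2021stable}) yields a generalized polynomial
\[
q^{(i)}_{\Omega}(t)=\sum_{m=1}^{M} c^{(i)}_{\Omega,m}\,\psi^{(i)}_m(t)
\]
with $q^{(i)}_\Omega(t)\ge F^{(i)}_{T_\Omega}(t)$ on $\mathbb{I}$ and equality at every node $t^{(i)}_k$ for $k\in\Omega$. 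Concretely, $q^{(i)}_\Omega\ge 0$ everywhere, $q^{(i)}_\Omega\ge 1$ off the $\epsilon$-tube $T^{(i)}_{\Omega,\epsilon}$, and $q^{(i)}_\Omega(t^{(i)}_k)=0$ for every $k\in\Omega$; for $\Omega=\emptyset$ one may take $q^{(i)}_\emptyset\equiv 1$, since $F^{(i)}_\emptyset\equiv 1$.

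For each ordered partition $\pi=(\Omega_1,\ldots,\Omega_d)$ of $[K]$ I set $Q^\pi(\theta)=\prod_{i=1}^d q^{(i)}_{\Omega_i}(t^{(i)})$ and define
\[
Q(\theta)=\sum_\pi Q^\pi(\theta),
\]
summed over all $d^K$ ordered partitions (equivalently over all maps $f\colon[K]\to[d]$ via $\Omega_i=f^{-1}(i)$). Expanding the product exhibits $Q$ as a linear combination of the tensor monomials $\psi^{(1)}_{m_1}(t^{(1)})\cdots\psi^{(d)}_{m_d}(t^{(d)})$, as Lemma~\ref{lem:3} requires. Nonnegativity $Q\ge 0$ on $\mathbb{I}^d$ is immediate, and for each $k\in[K]$ every partition $\pi$ places $k$ in a unique block $\Omega_{i^{*}(k)}$, so $q^{(i^{*}(k))}_{\Omega_{i^{*}(k)}}(t^{(i^{*}(k))}_k)=0$, which forces $Q^\pi(\theta_k)=0$ and hence $Q(\theta_k)=0$ after summing.

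The crux of the argument is the uniform lower bound $Q(\theta)\ge(d-1)d^{K-2}$ for $\theta\notin\Theta_\epsilon$. Set $J_k=\{i\in[d]\colon t^{(i)}\in t^{(i)}_{k,\epsilon}\}$; the hypothesis $\theta\notin\theta_{k,\epsilon}$ gives $|J_k|\le d-1$, and $\epsilon$-separation of the supports ensures that each coordinate index $i$ lies in at most one $J_k$, so $\sum_k|J_k|\le d$. A partition $\pi$ contributes $Q^\pi(\theta)\ge 1$ precisely when $i^{*}(k)\notin J_k$ for every $k$, and the number of such partitions is exactly $\prod_{k=1}^K(d-|J_k|)$. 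Therefore
\[
Q(\theta)\ \ge\ \min\Bigl\{\prod_{k=1}^{K}(d-|J_k|)\ :\ 0\le|J_k|\le d-1,\ \textstyle\sum_k |J_k|\le d\Bigr\},
\]
and a short enumeration shows that the minimum is attained (up to permutation) by $(|J_1|,\ldots,|J_K|)=(d-1,1,0,\ldots,0)$, giving $1\cdot(d-1)\cdot d^{K-2}=(d-1)d^{K-2}=\bar g$.

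I expect the most delicate ingredient to be the 1D construction of $q^{(i)}_\Omega$ in the first paragraph: the $T^{*}$-system axiom is invoked in a subtle way through its admissible-sequence formulation in order to guarantee both strict dominance of $q^{(i)}_\Omega$ over $F^{(i)}_{T_\Omega}$ off the $\epsilon$-tube and exact contact at the prescribed nodes. Once these 1D factors are in hand, the $d$-dimensional assembly is just the tensor-product sum above, and the quantitative bound reduces to the elementary finite optimization of the previous paragraph.
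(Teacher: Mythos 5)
Your construction coincides with the paper's own proof: both invoke the univariate $T^{*}$-system lemma (Lemma~\ref{lem:5}) to produce $q_{T_{\Omega_i}}\ge F_{T_{\Omega_i}}$ with equality on $T_{\Omega_i}$, form $Q=\sum_{\pi}\prod_{i=1}^d q_{T_{\Omega_i}}(t^{(i)})$ over all $d^K$ ordered, possibly-empty partitions of $[K]$, verify nonnegativity and vanishing on $\Theta$ in the same way, and lower-bound $Q$ on $\Theta_\epsilon^C$ by counting the partitions whose product term is at least~$1$. The only difference is one of packaging: you compress the paper's three-case analysis together with its two supporting inequalities (Lemmas~\ref{lem:6} and~\ref{lem:7}) into a single finite minimization of $\prod_k(d-|J_k|)$ subject to $\sum_k|J_k|\le d$ and $|J_k|\le d-1$, and you assert the minimizer $(d-1,1,0,\dots,0)$ without justification, whereas those two lemmas are precisely where the paper proves that claim, so you would still need to supply that elementary argument in a full write-up.
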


\begin{proposition}[Dual certificate for error near support]\label{prop:2}
For $M \geq 2K+2$, suppose that $\{\psi^{(i)}_{m}\}_{m=1}^{M}$ form a T-system on $\mathbb{I}$. 
%For every partition $\pi = \{\Omega_1, ..., \Omega_d\}$ of $[K]$, 
Suppose also that all $\{F^{\pm}_{t^{(j)}_{k}}\} \cup \{\psi^{(i)}_{m}\}_{m=1}^{M}$ %for each $i \in {1, ..., d}$ \;
are $T^*_{K, \epsilon}$-systems on $\mathbb{I}$ for each $1 \leq j \leq d-1$. Additionally, $\{F^{+}_{t^{(d)}_{k}}\} \cup \{\psi^{(i)}_{m}\}_{m=1}^{M}$ also form $T^*_{K, \epsilon}$-systems on $\mathbb{I}$. Then the dual certificate $Q^{0}$, as specified in Lemma \ref{lem:4} , exists with 
$$\alpha = \alpha(\epsilon) = 1 + (-1)^{d-1}/q^{\pi_{*}}_{max}(\epsilon),$$
where $q^{\pi_{*}}_{max}(\epsilon)$ is defined in the proof.
\end{proposition}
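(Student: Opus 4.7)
The plan is to exhibit an explicit polynomial $Q^0$ in the tensor-product basis $\{\psi^{(1)}_{m_1}\otimes\cdots\otimes\psi^{(d)}_{m_d}\}$ satisfying the pointwise inequalities of Lemma~\ref{lem:4}. The construction parallels the two-dimensional argument of Proposition 22 in~\cite{eftekhari2021stable}: assemble tensor products of one-dimensional interpolants, where each factor is supplied by the relevant $T^*_{K,\epsilon}$-system hypothesis, and combine them with signs dictated by the signs of $\int_{\theta_{k,\epsilon}} h(d\theta)$.

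First I would split $[K]$ into $K^+ = \{k : \int_{\theta_{k,\epsilon}} h(d\theta) > 0\}$ and $K^- = [K]\setminus K^+$, so the target values of $Q^0$ at the atoms $\theta_k$ are $+1$ on $K^+$ and $-1+\alpha$ on $K^-$. Next, for each $i\in[d-1]$ and each $k\in[K]$, the assumption that $\{F^{\pm}_{t^{(i)}_k}\}\cup\{\psi^{(i)}_{m}\}_{m=1}^M$ is a $T^*_{K,\epsilon}$-system yields one-dimensional polynomials $q^{(i),\pm}_k$ in the span of $\{\psi^{(i)}_m\}$ that interpolate $\pm F_{t^{(i)}_k}$ from above, take the value $\pm 1$ at $t^{(i)}_k$, and vanish (or decay) outside $t^{(i)}_{k,\epsilon}$; the analogous construction in the last coordinate uses the $\{F^+_{t^{(d)}_k}\}$ hypothesis to supply $q^{(d),+}_k$. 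This one-dimensional interpolation machinery is exactly what is established in~\cite{eftekhari2021sparse}.

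The second step is to assemble
\[
Q^0(\theta) \;=\; -1 \;+\; \sum_{k=1}^{K} \Bigl(\prod_{i=1}^{d-1} q^{(i),\sigma_i(k)}_k(t^{(i)})\Bigr)\, q^{(d),+}_k(t^{(d)}),
\]
with suitable scalings, where the signs $\sigma_i(k)\in\{+,-\}$ are chosen so that the $k$th product evaluates at $\theta_k$ to $+2$ when $k\in K^+$ and to $\alpha$ when $k\in K^-$. Since each one-dimensional factor is supported essentially on its own $\epsilon$-neighborhood, inside any $\theta_{k,\epsilon}$ the $k$th summand dominates while the others contribute vanishingly; outside $\Theta_{\epsilon}$ the summands are nonnegative (or controllably negative), so $Q^0\geq -1$ on $\mathrm{interior}(\mathbb{I}^d)\setminus\Theta_{\epsilon}$. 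The parity factor $(-1)^{d-1}$ in the stated $\alpha$ comes from the $d-1$ signed coordinates being balanced against a single unsigned last coordinate, and $q^{\pi_*}_{\max}(\epsilon)$ is the supremum of the product of the 1D interpolants' peak values over the worst-case sign pattern $\pi_*$; with this normalization, the value $Q^0(\theta_k) = -1 + \alpha$ for $k\in K^-$ forces the prescribed $\alpha = 1+(-1)^{d-1}/q^{\pi_*}_{\max}(\epsilon)$.

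The main obstacle is controlling the cross-terms when evaluating $Q^0$ inside some $\theta_{k,\epsilon}$: bumps centered at other atoms $\theta_{k'}$, $k'\neq k$, could in principle drive $Q^0$ below the required lower bound. Bounding these spill-overs requires the 1D interpolants to decay sufficiently fast outside their $\epsilon$-neighborhoods — a property encoded in the $T^*$-system hypothesis via the equal-rate vanishing of the minors in Definition~\ref{def:T*_system} — and then a careful accounting that the aggregate cross-contribution is absorbed by the slack $\alpha$. Verifying equality at $\theta\in\Theta$ and tracking the precise $(-1)^{d-1}$ sign through the $d$-fold tensor product is the bookkeeping-heavy portion of the argument, and is where the formal definition of $q^{\pi_*}_{\max}(\epsilon)$ in the proof must be pinned down.
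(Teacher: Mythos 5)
Your proposal departs from the paper's construction in a way that creates a genuine gap. You write $Q^0(\theta) = -1 + \sum_k(\ldots)$, but the constant function $-1$ need not lie in the span of the tensor-product basis $\{\psi^{(1)}_{m_1}\otimes\cdots\otimes\psi^{(d)}_{m_d}\}$, and Lemma~\ref{lem:4} requires $Q^0$ to be a polynomial in exactly that basis. Nothing in the hypotheses puts the constant $1$ in the span of the $\psi^{(i)}_{m}$'s, so your candidate is not admissible. The paper avoids the offset altogether: it takes $Q^\pi$ to be a pure sum of $K$ tensor products (no additive constant), arranging each one-dimensional factor so that each product already satisfies $\geq -1$ away from $\Theta_\epsilon$, $\geq \pi_k$ on $\theta_{k,\epsilon}$, and equals the prescribed value at $\theta_k$.

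Two further structural mismatches. First, your worry about cross-term ``spill-over'' is not how the paper controls evaluation at the atoms. Lemma~\ref{lem:5} produces univariate interpolants $q^{\pi_k}_{t^{(j)}_k}$ that lie above $F^{\pm}_{t^{(j)}_k}$ \emph{and equal it on the entire set} $T^{(j)}=\{t^{(j)}_l\}_{l=1}^K$, so $q^{\pi_k}_{t^{(j)}_k}(t^{(j)}_{k_0})=0$ for every $k_0\neq k$; the $k$-th summand vanishes exactly at every other atom, and no rate-of-decay accounting is needed there. Second, you allow a coordinate-dependent sign $\sigma_i(k)$, whereas the paper uses a single per-atom sign $\pi_k\in\{\pm1\}$ shared across all of the first $d-1$ coordinates, with the last coordinate always built from $F^+$. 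That asymmetry is precisely the source of the factor $(-1)^{d-1}$: at $\theta_k$ with $\pi_k=-1$, each of the $d-1$ signed factors contributes $-1/(\epsilon\, q^{\pi}_{\max}(\epsilon))$ while the $d$-th factor contributes $\epsilon^{d-1}$, and the $\epsilon$'s cancel. Relatedly, $\pi_*$ is not a worst case---it is the specific sign pattern of $\{\int_{\theta_{k,\epsilon}} h(\text{d}\theta)\}_{k}$ read off from $h$, and $q^{\pi_*}_{\max}(\epsilon)$ is the normalized maximum of the \emph{last-coordinate} interpolants over the negatively-signed atoms, not a supremum over sign patterns of products of peak values.
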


\section{Discussion}
In this paper we have shown that program~\eqref{eq::5} solves the super-resolution imaging problem in any dimension, assuming that the point-spread function factorizes, and that translates of each of the factors satisfy a number of $T$-system and $T^*$-system conditions. Our work extends~\cite{eftekhari2021stable} from the 2-dimensional to the $d$-dimensional case. We show that if there is no measurement noise, then, one can recover the true locations of the point-sources regardless of how close they are to each other, and, if there is noise, then the quality of the estimate 
will depend on the separation of the point-sources through the noise level.

We wish to remark that here we show that the output of a certain convex program~\eqref{eq::5} is the desired true image, but we do not provide an algorithm for solving this program. In practice, as noted in~\cite{eftekhari2021sparse}, one may want to minimize a certain objective function, such as the total variation of the unknown measure $\mu$. There already exist algorithms for solving such programs, for example, see~\cite{boyd2017}.

Of course, one of the main open questions that remain is whether our results can extend to the case when the point-spread function does not factorize. Solving this problem may require developing a high-dimensional analog of $T$-systems, which are currently only defined for functions in one variable. Perhaps a simpler start is to consider point-spread functions that factorize after a suitable rotation is applied -- this is at least true for all Gaussian point-spread functions.

\section*{Acknowledgements}
We wish to thank Geoffrey Schiebinger and Jean-Baptiste Seby for helpful discussions at the beginning stages of this project. ER was supported by an NSERC Discovery Grant (DGECR-2020-00338).
%\printbibliography
\bibliographystyle{apacite}
\bibliography{reference}

\begin{thebibliography}{}

\bibitem [\protect \citeauthoryear {%
Azais%
, De~Castro%
\BCBL {}\ \BBA {} Gamboa%
}{%
Azais%
\ \protect \BOthers {.}}{%
{\protect \APACyear {2015}}%
}]{%
azais2015spike}
\APACinsertmetastar {%
azais2015spike}%
\begin{APACrefauthors}%
Azais, J\BHBI M.%
, De~Castro, Y.%
\BCBL {}\ \BBA {} Gamboa, F.%
\end{APACrefauthors}%
\unskip\
\newblock
\APACrefYearMonthDay{2015}{}{}.
\newblock
{\BBOQ}\APACrefatitle {Spike detection from inaccurate samplings} {Spike
  detection from inaccurate samplings}.{\BBCQ}
\newblock
\APACjournalVolNumPages{Applied and Computational Harmonic
  Analysis}{38}{2}{177--195}.
\PrintBackRefs{\CurrentBib}

\bibitem [\protect \citeauthoryear {%
Bendory%
, Dekel%
\BCBL {}\ \BBA {} Feuer%
}{%
Bendory%
\ \protect \BOthers {.}}{%
{\protect \APACyear {2016}}%
}]{%
bendory2016robust}
\APACinsertmetastar {%
bendory2016robust}%
\begin{APACrefauthors}%
Bendory, T.%
, Dekel, S.%
\BCBL {}\ \BBA {} Feuer, A.%
\end{APACrefauthors}%
\unskip\
\newblock
\APACrefYearMonthDay{2016}{}{}.
\newblock
{\BBOQ}\APACrefatitle {Robust recovery of stream of pulses using convex
  optimization} {Robust recovery of stream of pulses using convex
  optimization}.{\BBCQ}
\newblock
\APACjournalVolNumPages{Journal of mathematical analysis and
  applications}{442}{2}{511--536}.
\PrintBackRefs{\CurrentBib}

\bibitem [\protect \citeauthoryear {%
Betzig%
\ \protect \BOthers {.}}{%
Betzig%
\ \protect \BOthers {.}}{%
{\protect \APACyear {2006}}%
}]{%
betzig2006imaging}
\APACinsertmetastar {%
betzig2006imaging}%
\begin{APACrefauthors}%
Betzig, E.%
, Patterson, G\BPBI H.%
, Sougrat, R.%
, Lindwasser, O\BPBI W.%
, Olenych, S.%
, Bonifacino, J\BPBI S.%
\BDBL {}Hess, H\BPBI F.%
\end{APACrefauthors}%
\unskip\
\newblock
\APACrefYearMonthDay{2006}{}{}.
\newblock
{\BBOQ}\APACrefatitle {Imaging intracellular fluorescent proteins at nanometer
  resolution} {Imaging intracellular fluorescent proteins at nanometer
  resolution}.{\BBCQ}
\newblock
\APACjournalVolNumPages{science}{313}{5793}{1642--1645}.
\PrintBackRefs{\CurrentBib}

\bibitem [\protect \citeauthoryear {%
Boyd%
, Schiebinger%
\BCBL {}\ \BBA {} Recht%
}{%
Boyd%
\ \protect \BOthers {.}}{%
{\protect \APACyear {2017}}%
}]{%
boyd2017}
\APACinsertmetastar {%
boyd2017}%
\begin{APACrefauthors}%
Boyd, N.%
, Schiebinger, G.%
\BCBL {}\ \BBA {} Recht, B.%
\end{APACrefauthors}%
\unskip\
\newblock
\APACrefYearMonthDay{2017}{}{}.
\newblock
{\BBOQ}\APACrefatitle {The alternating descent conditional gradient method for
  sparse inverse problems} {The alternating descent conditional gradient method
  for sparse inverse problems}.{\BBCQ}
\newblock
\APACjournalVolNumPages{SIAM Journal on Optimization}{27}{2}{616--639}.
\PrintBackRefs{\CurrentBib}

\bibitem [\protect \citeauthoryear {%
Cand{\`e}s%
\ \BBA {} Fernandez-Granda%
}{%
Cand{\`e}s%
\ \BBA {} Fernandez-Granda%
}{%
{\protect \APACyear {2014}}%
}]{%
candes2014towards}
\APACinsertmetastar {%
candes2014towards}%
\begin{APACrefauthors}%
Cand{\`e}s, E\BPBI J.%
\BCBT {}\ \BBA {} Fernandez-Granda, C.%
\end{APACrefauthors}%
\unskip\
\newblock
\APACrefYearMonthDay{2014}{}{}.
\newblock
{\BBOQ}\APACrefatitle {Towards a mathematical theory of super-resolution}
  {Towards a mathematical theory of super-resolution}.{\BBCQ}
\newblock
\APACjournalVolNumPages{Communications on pure and applied
  Mathematics}{67}{6}{906--956}.
\PrintBackRefs{\CurrentBib}

\bibitem [\protect \citeauthoryear {%
Cand{\`e}s%
, Romberg%
\BCBL {}\ \BBA {} Tao%
}{%
Cand{\`e}s%
\ \protect \BOthers {.}}{%
{\protect \APACyear {2006}}%
}]{%
candes2006robust}
\APACinsertmetastar {%
candes2006robust}%
\begin{APACrefauthors}%
Cand{\`e}s, E\BPBI J.%
, Romberg, J.%
\BCBL {}\ \BBA {} Tao, T.%
\end{APACrefauthors}%
\unskip\
\newblock
\APACrefYearMonthDay{2006}{}{}.
\newblock
{\BBOQ}\APACrefatitle {Robust uncertainty principles: Exact signal
  reconstruction from highly incomplete frequency information} {Robust
  uncertainty principles: Exact signal reconstruction from highly incomplete
  frequency information}.{\BBCQ}
\newblock
\APACjournalVolNumPages{IEEE Transactions on information
  theory}{52}{2}{489--509}.
\PrintBackRefs{\CurrentBib}

\bibitem [\protect \citeauthoryear {%
Cand\'es%
\ \BBA {} Tao%
}{%
Cand\'es%
\ \BBA {} Tao%
}{%
{\protect \APACyear {2005}}%
}]{%
candes2005decoding}
\APACinsertmetastar {%
candes2005decoding}%
\begin{APACrefauthors}%
Cand\'es, E\BPBI J.%
\BCBT {}\ \BBA {} Tao, T.%
\end{APACrefauthors}%
\unskip\
\newblock
\APACrefYearMonthDay{2005}{}{}.
\newblock
{\BBOQ}\APACrefatitle {Decoding by linear programming} {Decoding by linear
  programming}.{\BBCQ}
\newblock
\APACjournalVolNumPages{IEEE transactions on information
  theory}{51}{12}{4203--4215}.
\PrintBackRefs{\CurrentBib}

\bibitem [\protect \citeauthoryear {%
Demanet%
, Needell%
\BCBL {}\ \BBA {} Nguyen%
}{%
Demanet%
\ \protect \BOthers {.}}{%
{\protect \APACyear {2013}}%
}]{%
demanet2013super}
\APACinsertmetastar {%
demanet2013super}%
\begin{APACrefauthors}%
Demanet, L.%
, Needell, D.%
\BCBL {}\ \BBA {} Nguyen, N.%
\end{APACrefauthors}%
\unskip\
\newblock
\APACrefYearMonthDay{2013}{}{}.
\newblock
{\BBOQ}\APACrefatitle {Super-resolution via superset selection and pruning}
  {Super-resolution via superset selection and pruning}.{\BBCQ}
\newblock
\APACjournalVolNumPages{arXiv preprint arXiv:1302.6288}{}{}{}.
\PrintBackRefs{\CurrentBib}

\bibitem [\protect \citeauthoryear {%
Denoyelle%
, Duval%
\BCBL {}\ \BBA {} Peyr{\'e}%
}{%
Denoyelle%
\ \protect \BOthers {.}}{%
{\protect \APACyear {2017}}%
}]{%
denoyelle2017support}
\APACinsertmetastar {%
denoyelle2017support}%
\begin{APACrefauthors}%
Denoyelle, Q.%
, Duval, V.%
\BCBL {}\ \BBA {} Peyr{\'e}, G.%
\end{APACrefauthors}%
\unskip\
\newblock
\APACrefYearMonthDay{2017}{}{}.
\newblock
{\BBOQ}\APACrefatitle {Support recovery for sparse super-resolution of positive
  measures} {Support recovery for sparse super-resolution of positive
  measures}.{\BBCQ}
\newblock
\APACjournalVolNumPages{Journal of Fourier Analysis and
  Applications}{23}{5}{1153--1194}.
\PrintBackRefs{\CurrentBib}

\bibitem [\protect \citeauthoryear {%
Donoho%
}{%
Donoho%
}{%
{\protect \APACyear {2006}}%
}]{%
donoho2006compressed}
\APACinsertmetastar {%
donoho2006compressed}%
\begin{APACrefauthors}%
Donoho, D\BPBI L.%
\end{APACrefauthors}%
\unskip\
\newblock
\APACrefYearMonthDay{2006}{}{}.
\newblock
{\BBOQ}\APACrefatitle {Compressed sensing} {Compressed sensing}.{\BBCQ}
\newblock
\APACjournalVolNumPages{IEEE Transactions on information
  theory}{52}{4}{1289--1306}.
\PrintBackRefs{\CurrentBib}

\bibitem [\protect \citeauthoryear {%
Duval%
\ \BBA {} Peyr{\'e}%
}{%
Duval%
\ \BBA {} Peyr{\'e}%
}{%
{\protect \APACyear {2015}}%
}]{%
duval2015exact}
\APACinsertmetastar {%
duval2015exact}%
\begin{APACrefauthors}%
Duval, V.%
\BCBT {}\ \BBA {} Peyr{\'e}, G.%
\end{APACrefauthors}%
\unskip\
\newblock
\APACrefYearMonthDay{2015}{}{}.
\newblock
{\BBOQ}\APACrefatitle {Exact support recovery for sparse spikes deconvolution}
  {Exact support recovery for sparse spikes deconvolution}.{\BBCQ}
\newblock
\APACjournalVolNumPages{Foundations of Computational
  Mathematics}{15}{5}{1315--1355}.
\PrintBackRefs{\CurrentBib}

\bibitem [\protect \citeauthoryear {%
Eftekhari%
, Bendory%
\BCBL {}\ \BBA {} Tang%
}{%
Eftekhari%
, Bendory%
\BCBL {}\ \BBA {} Tang%
}{%
{\protect \APACyear {2021}}%
}]{%
eftekhari2021stable}
\APACinsertmetastar {%
eftekhari2021stable}%
\begin{APACrefauthors}%
Eftekhari, A.%
, Bendory, T.%
\BCBL {}\ \BBA {} Tang, G.%
\end{APACrefauthors}%
\unskip\
\newblock
\APACrefYearMonthDay{2021}{}{}.
\newblock
{\BBOQ}\APACrefatitle {Stable super-resolution of images: theoretical study}
  {Stable super-resolution of images: theoretical study}.{\BBCQ}
\newblock
\APACjournalVolNumPages{Information and Inference: A Journal of the
  IMA}{10}{1}{161--193}.
\PrintBackRefs{\CurrentBib}

\bibitem [\protect \citeauthoryear {%
Eftekhari%
, Tanner%
, Thompson%
, Toader%
\BCBL {}\ \BBA {} Tyagi%
}{%
Eftekhari%
, Tanner%
\BCBL {}\ \protect \BOthers {.}}{%
{\protect \APACyear {2021}}%
}]{%
eftekhari2021sparse}
\APACinsertmetastar {%
eftekhari2021sparse}%
\begin{APACrefauthors}%
Eftekhari, A.%
, Tanner, J.%
, Thompson, A.%
, Toader, B.%
\BCBL {}\ \BBA {} Tyagi, H.%
\end{APACrefauthors}%
\unskip\
\newblock
\APACrefYearMonthDay{2021}{}{}.
\newblock
{\BBOQ}\APACrefatitle {Sparse non-negative super-resolution — simplified and
  stabilised} {Sparse non-negative super-resolution — simplified and
  stabilised}.{\BBCQ}
\newblock
\APACjournalVolNumPages{Applied and Computational Harmonic
  Analysis}{50}{}{216--280}.
\PrintBackRefs{\CurrentBib}

\bibitem [\protect \citeauthoryear {%
Ekanadham%
, Tranchina%
\BCBL {}\ \BBA {} Simoncelli%
}{%
Ekanadham%
\ \protect \BOthers {.}}{%
{\protect \APACyear {2011}}%
}]{%
ekanadham2011blind}
\APACinsertmetastar {%
ekanadham2011blind}%
\begin{APACrefauthors}%
Ekanadham, C.%
, Tranchina, D.%
\BCBL {}\ \BBA {} Simoncelli, E.%
\end{APACrefauthors}%
\unskip\
\newblock
\APACrefYearMonthDay{2011}{}{}.
\newblock
{\BBOQ}\APACrefatitle {A blind sparse deconvolution method for neural spike
  identification} {A blind sparse deconvolution method for neural spike
  identification}.{\BBCQ}
\newblock
\APACjournalVolNumPages{Advances in Neural Information Processing
  Systems}{24}{}{}.
\PrintBackRefs{\CurrentBib}

\bibitem [\protect \citeauthoryear {%
Evanko%
}{%
Evanko%
}{%
{\protect \APACyear {2009}}%
}]{%
evanko2009primer}
\APACinsertmetastar {%
evanko2009primer}%
\begin{APACrefauthors}%
Evanko, D.%
\end{APACrefauthors}%
\unskip\
\newblock
\APACrefYearMonthDay{2009}{}{}.
\newblock
{\BBOQ}\APACrefatitle {Primer: fluorescence imaging under the diffraction
  limit} {Primer: fluorescence imaging under the diffraction limit}.{\BBCQ}
\newblock
\APACjournalVolNumPages{Nature methods}{6}{1}{19--20}.
\PrintBackRefs{\CurrentBib}

\bibitem [\protect \citeauthoryear {%
Fuchs%
}{%
Fuchs%
}{%
{\protect \APACyear {2005}}%
}]{%
fuchs2005sparsity}
\APACinsertmetastar {%
fuchs2005sparsity}%
\begin{APACrefauthors}%
Fuchs, J\BHBI J.%
\end{APACrefauthors}%
\unskip\
\newblock
\APACrefYearMonthDay{2005}{}{}.
\newblock
{\BBOQ}\APACrefatitle {Sparsity and uniqueness for some specific
  under-determined linear systems} {Sparsity and uniqueness for some specific
  under-determined linear systems}.{\BBCQ}
\newblock
\BIn{} \APACrefbtitle {Proceedings.(ICASSP'05). IEEE International Conference
  on Acoustics, Speech, and Signal Processing, 2005.} {Proceedings.(icassp'05).
  ieee international conference on acoustics, speech, and signal processing,
  2005.}\ (\BVOL~5, \BPGS\ v--729).
\PrintBackRefs{\CurrentBib}

\bibitem [\protect \citeauthoryear {%
Fyhn%
, Dadkhahi%
\BCBL {}\ \BBA {} Duarte%
}{%
Fyhn%
\ \protect \BOthers {.}}{%
{\protect \APACyear {2013}}%
}]{%
fyhn2013spectral}
\APACinsertmetastar {%
fyhn2013spectral}%
\begin{APACrefauthors}%
Fyhn, K.%
, Dadkhahi, H.%
\BCBL {}\ \BBA {} Duarte, M\BPBI F.%
\end{APACrefauthors}%
\unskip\
\newblock
\APACrefYearMonthDay{2013}{}{}.
\newblock
{\BBOQ}\APACrefatitle {Spectral compressive sensing with polar interpolation}
  {Spectral compressive sensing with polar interpolation}.{\BBCQ}
\newblock
\BIn{} \APACrefbtitle {2013 IEEE International Conference on Acoustics, Speech
  and Signal Processing} {2013 ieee international conference on acoustics,
  speech and signal processing}\ (\BPGS\ 6225--6229).
\PrintBackRefs{\CurrentBib}

\bibitem [\protect \citeauthoryear {%
Hess%
, Girirajan%
\BCBL {}\ \BBA {} Mason%
}{%
Hess%
\ \protect \BOthers {.}}{%
{\protect \APACyear {2006}}%
}]{%
hess2006ultra}
\APACinsertmetastar {%
hess2006ultra}%
\begin{APACrefauthors}%
Hess, S\BPBI T.%
, Girirajan, T\BPBI P.%
\BCBL {}\ \BBA {} Mason, M\BPBI D.%
\end{APACrefauthors}%
\unskip\
\newblock
\APACrefYearMonthDay{2006}{}{}.
\newblock
{\BBOQ}\APACrefatitle {Ultra-high resolution imaging by fluorescence
  photoactivation localization microscopy} {Ultra-high resolution imaging by
  fluorescence photoactivation localization microscopy}.{\BBCQ}
\newblock
\APACjournalVolNumPages{Biophysical journal}{91}{11}{4258--4272}.
\PrintBackRefs{\CurrentBib}

\bibitem [\protect \citeauthoryear {%
Karlin%
\ \BBA {} Studden%
}{%
Karlin%
\ \BBA {} Studden%
}{%
{\protect \APACyear {1966}}%
}]{%
karlin1966tchebycheff}
\APACinsertmetastar {%
karlin1966tchebycheff}%
\begin{APACrefauthors}%
Karlin, S.%
\BCBT {}\ \BBA {} Studden, W\BPBI J.%
\end{APACrefauthors}%
\unskip\
\newblock
\APACrefYear{1966}.
\newblock
\APACrefbtitle {Tchebycheff systems: With applications in analysis and
  statistics} {Tchebycheff systems: With applications in analysis and
  statistics}\ (\BVOL~15).
\newblock
\APACaddressPublisher{}{Wiley}.
\PrintBackRefs{\CurrentBib}

\bibitem [\protect \citeauthoryear {%
Khaidukov%
, Landa%
\BCBL {}\ \BBA {} Moser%
}{%
Khaidukov%
\ \protect \BOthers {.}}{%
{\protect \APACyear {2004}}%
}]{%
khaidukov2004diffraction}
\APACinsertmetastar {%
khaidukov2004diffraction}%
\begin{APACrefauthors}%
Khaidukov, V.%
, Landa, E.%
\BCBL {}\ \BBA {} Moser, T\BPBI J.%
\end{APACrefauthors}%
\unskip\
\newblock
\APACrefYearMonthDay{2004}{}{}.
\newblock
{\BBOQ}\APACrefatitle {Diffraction imaging by focusing-defocusing: An outlook
  on seismic superresolution} {Diffraction imaging by focusing-defocusing: An
  outlook on seismic superresolution}.{\BBCQ}
\newblock
\APACjournalVolNumPages{Geophysics}{69}{6}{1478--1490}.
\PrintBackRefs{\CurrentBib}

\bibitem [\protect \citeauthoryear {%
Morgenshtern%
\ \BBA {} Cand\`es%
}{%
Morgenshtern%
\ \BBA {} Cand\`es%
}{%
{\protect \APACyear {2015}}%
}]{%
morgenstern2015stable}
\APACinsertmetastar {%
morgenstern2015stable}%
\begin{APACrefauthors}%
Morgenshtern, V.%
\BCBT {}\ \BBA {} Cand\`es, E.%
\end{APACrefauthors}%
\unskip\
\newblock
\APACrefYearMonthDay{2015}{}{}.
\newblock
{\BBOQ}\APACrefatitle {Stable super-resolution of positive sources: the
  discrete setup} {Stable super-resolution of positive sources: the discrete
  setup}.{\BBCQ}
\newblock
\APACjournalVolNumPages{arXiv:1504.00717}{}{}{}.
\PrintBackRefs{\CurrentBib}

\bibitem [\protect \citeauthoryear {%
Piccoli%
\ \BBA {} Rossi%
}{%
Piccoli%
\ \BBA {} Rossi%
}{%
{\protect \APACyear {2014}}%
}]{%
piccoli2014generalized}
\APACinsertmetastar {%
piccoli2014generalized}%
\begin{APACrefauthors}%
Piccoli, B.%
\BCBT {}\ \BBA {} Rossi, F.%
\end{APACrefauthors}%
\unskip\
\newblock
\APACrefYearMonthDay{2014}{}{}.
\newblock
{\BBOQ}\APACrefatitle {Generalized Wasserstein distance and its application to
  transport equations with source} {Generalized wasserstein distance and its
  application to transport equations with source}.{\BBCQ}
\newblock
\APACjournalVolNumPages{Archive for Rational Mechanics and
  Analysis}{211}{1}{335--358}.
\PrintBackRefs{\CurrentBib}

\bibitem [\protect \citeauthoryear {%
Puschmann%
\ \BBA {} Kneer%
}{%
Puschmann%
\ \BBA {} Kneer%
}{%
{\protect \APACyear {2005}}%
}]{%
puschmann2005super}
\APACinsertmetastar {%
puschmann2005super}%
\begin{APACrefauthors}%
Puschmann, K\BPBI G.%
\BCBT {}\ \BBA {} Kneer, F.%
\end{APACrefauthors}%
\unskip\
\newblock
\APACrefYearMonthDay{2005}{}{}.
\newblock
{\BBOQ}\APACrefatitle {On super-resolution in astronomical imaging} {On
  super-resolution in astronomical imaging}.{\BBCQ}
\newblock
\APACjournalVolNumPages{Astronomy \& Astrophysics}{436}{1}{373--378}.
\PrintBackRefs{\CurrentBib}

\bibitem [\protect \citeauthoryear {%
Rust%
, Bates%
\BCBL {}\ \BBA {} Zhuang%
}{%
Rust%
\ \protect \BOthers {.}}{%
{\protect \APACyear {2006}}%
}]{%
rust2006sub}
\APACinsertmetastar {%
rust2006sub}%
\begin{APACrefauthors}%
Rust, M\BPBI J.%
, Bates, M.%
\BCBL {}\ \BBA {} Zhuang, X.%
\end{APACrefauthors}%
\unskip\
\newblock
\APACrefYearMonthDay{2006}{}{}.
\newblock
{\BBOQ}\APACrefatitle {Sub-diffraction-limit imaging by stochastic optical
  reconstruction microscopy (STORM)} {Sub-diffraction-limit imaging by
  stochastic optical reconstruction microscopy (storm)}.{\BBCQ}
\newblock
\APACjournalVolNumPages{Nature methods}{3}{10}{793--796}.
\PrintBackRefs{\CurrentBib}

\bibitem [\protect \citeauthoryear {%
Schiebinger%
, Robeva%
\BCBL {}\ \BBA {} Recht%
}{%
Schiebinger%
\ \protect \BOthers {.}}{%
{\protect \APACyear {2018}}%
}]{%
schiebinger2018superresolution}
\APACinsertmetastar {%
schiebinger2018superresolution}%
\begin{APACrefauthors}%
Schiebinger, G.%
, Robeva, E.%
\BCBL {}\ \BBA {} Recht, B.%
\end{APACrefauthors}%
\unskip\
\newblock
\APACrefYearMonthDay{2018}{}{}.
\newblock
{\BBOQ}\APACrefatitle {Superresolution without separation} {Superresolution
  without separation}.{\BBCQ}
\newblock
\APACjournalVolNumPages{Information and Inference: A Journal of the
  IMA}{7}{1}{1--30}.
\PrintBackRefs{\CurrentBib}

\bibitem [\protect \citeauthoryear {%
Tang%
, Bhaskar%
, Shah%
\BCBL {}\ \BBA {} Recht%
}{%
Tang%
\ \protect \BOthers {.}}{%
{\protect \APACyear {2013}}%
}]{%
tang2013compressed}
\APACinsertmetastar {%
tang2013compressed}%
\begin{APACrefauthors}%
Tang, G.%
, Bhaskar, B\BPBI N.%
, Shah, P.%
\BCBL {}\ \BBA {} Recht, B.%
\end{APACrefauthors}%
\unskip\
\newblock
\APACrefYearMonthDay{2013}{}{}.
\newblock
{\BBOQ}\APACrefatitle {Compressed sensing off the grid} {Compressed sensing off
  the grid}.{\BBCQ}
\newblock
\APACjournalVolNumPages{IEEE transactions on information
  theory}{59}{11}{7465--7490}.
\PrintBackRefs{\CurrentBib}

\bibitem [\protect \citeauthoryear {%
Tur%
, Eldar%
\BCBL {}\ \BBA {} Friedman%
}{%
Tur%
\ \protect \BOthers {.}}{%
{\protect \APACyear {2011}}%
}]{%
tur2011innovation}
\APACinsertmetastar {%
tur2011innovation}%
\begin{APACrefauthors}%
Tur, R.%
, Eldar, Y\BPBI C.%
\BCBL {}\ \BBA {} Friedman, Z.%
\end{APACrefauthors}%
\unskip\
\newblock
\APACrefYearMonthDay{2011}{}{}.
\newblock
{\BBOQ}\APACrefatitle {Innovation rate sampling of pulse streams with
  application to ultrasound imaging} {Innovation rate sampling of pulse streams
  with application to ultrasound imaging}.{\BBCQ}
\newblock
\APACjournalVolNumPages{IEEE Transactions on Signal
  Processing}{59}{4}{1827--1842}.
\PrintBackRefs{\CurrentBib}

\bibitem [\protect \citeauthoryear {%
Villani%
}{%
Villani%
}{%
{\protect \APACyear {2008}}%
}]{%
villani2008optimal}
\APACinsertmetastar {%
villani2008optimal}%
\begin{APACrefauthors}%
Villani, C.%
\end{APACrefauthors}%
\unskip\
\newblock
\APACrefYearMonthDay{2008}{}{}.
\newblock
{\BBOQ}\APACrefatitle {Optimal transport, old and new. Notes for the 2005
  Saint-Flour summer school} {Optimal transport, old and new. notes for the
  2005 saint-flour summer school}.{\BBCQ}
\newblock
\APACjournalVolNumPages{Grundlehren der mathematischen Wissenschaften
  [Fundamental Principles of Mathematical Sciences]. Springer}{3}{}{}.
\PrintBackRefs{\CurrentBib}

\end{thebibliography}
\appendix
\section{Proof of Lemma \ref{lem:2}} \label{apx:A}
Clearly, the true measure $\mu = \sum_{k=1}^{K}a_{k} \delta_{\theta_k}$ is a solution of \eqref{eq::5}. Suppose that there is another measure $\hat{\mu}$ that solves \eqref{eq::5}. Then, by \eqref{eq::5} we necessarily have 
$$\int_{\mathbb{I}^{d}} \Psi_{i_1, \cdots, i_d} \text{d}\mu = \int_{\mathbb{I}^{d}} \Psi_{i_1, \cdots, i_d} \text{d}\hat{\mu}$$
for all $i_1, ..., i_d \in [M]$. Therefore, summing both sides over $i_1, ..., i_d$ with coefficients $b_{i_1, ..., i_d}$, we get that 
$$\int_{\mathbb{I}^{d}}Q\text{d}\mu = \int_{\mathbb{I}^{d}} Q\text{d}\hat{\mu}$$
But note that $Q = 0$ on $\Theta$, therefore, 
$$\int_{\mathbb{I}^{d}\setminus \Theta} Q\text{d}\mu = \int_{\mathbb{I}^{d} \setminus \Theta} Q\text{d}\hat{\mu}$$
But $\mu = 0$ on $\mathbb{I}^{d}\setminus \Theta$, and $Q > 0$ on $\mathbb{I}^{d} \setminus\Theta$. Therefore, $\hat{\mu} = 0$ on $\mathbb{I}^{d} \setminus\Theta$.

\section{Proof of Lemma \ref{lem:2.1}}\label{apx:B}
The following lemma has been stated in~\cite{eftekhari2021sparse} in Lemma 15, and proved in~\cite{karlin1966tchebycheff} in Theorem 5.1. 
\begin{lemma}[Univariate polynomial of a T-system] \label{lem:uni}
Consider a set $T' \subset \mathbb{I}$ of size $K'$. With $M \geq 2K'+1$, suppose that $\{\phi_{m}\}_{m=1}^{M}$ form a T-system on $\mathbb{I}$. Then, there exist coefficients $\{b_m\}_{m=1}^{M}$ such that the polynomial $q_{T'} = \sum_{m=1}^{M}b_m \phi_m$ is non-negative on $\mathbb{I}$ and vanishes only on $T'$. 
\end{lemma}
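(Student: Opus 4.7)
The plan is to follow Karlin's classical argument: realize $q_{T'}$ as the limit, as $\eta\to 0$, of a family of approximating polynomials $q_\eta$ whose simple zeros are placed symmetrically about each point of $T'$, so that the limiting ``double'' vanishing at each $t_k \in T'$ produces the required non-negativity.

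First I would work inside the $(2K'+1)$-dimensional subspace $V := \mathrm{span}(\phi_1, \ldots, \phi_{2K'+1})$, which is itself a T-system. For sufficiently small $\eta > 0$, I would consider the $2K'$-point set $S_\eta := \{t_k - \eta,\, t_k + \eta : k \in [K']\} \subset \mathbb{I}$; the $2K'$ linear vanishing conditions $q(s) = 0$ for $s \in S_\eta$ cut out a one-dimensional subspace of $V$. I would pick $q_\eta \in V$ in this subspace, normalized so that $q_\eta(t_0) = 1$ at a fixed point $t_0$ chosen outside every narrow interval $(t_k - \eta, t_k + \eta)$. By the T-system property, any nonzero element of $V$ has at most $2K'$ zeros on $\mathbb{I}$, so the $2K'$ points of $S_\eta$ are exactly the zeros of $q_\eta$ and its sign strictly alternates between them; with the chosen normalization, the ``wide'' gaps carry sign $+$, while each ``narrow'' interval $(t_k - \eta, t_k + \eta)$ carries sign $-$.

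Next I would take $\eta \to 0$. Boundedness of the normalized coefficient vectors (via non-degeneracy of the T-system evaluation matrix at $S_\eta$) allows me to extract a subsequence $q_{\eta_n} \to q$ uniformly on $\mathbb{I}$. The limit $q \in V \subset \mathrm{span}(\phi_1, \ldots, \phi_M)$ then satisfies $q(t_0) = 1$, so $q \not\equiv 0$; on the complement of $T'$, $q \geq 0$ because every such point eventually lies in a wide gap; and at each $t_k \in T'$, the bound $q_{\eta_n}(t_k) \leq 0$ together with $q \geq 0$ on $\mathbb{I}\setminus T'$ and continuity forces $q(t_k) = 0$. Setting $b_m = 0$ for $m > 2K'+1$ then exhibits $q_{T'} := q$ as an element of $\mathrm{span}(\phi_1, \ldots, \phi_M)$ that is non-negative on $\mathbb{I}$ and vanishes on $T'$.

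The main obstacle will be ruling out any \emph{extra} zero of $q$ on $\mathbb{I}\setminus T'$. The classical resolution is a multiplicity count: each tangent zero of $q$ at $t_k$ arises as the coalescence of the two simple zeros $t_k \pm \eta$ of $q_\eta$ and should therefore be charged with multiplicity $2$, exhausting the zero budget $2K'$ allowed for a nonzero element of $V$. I would make this precise by a perturbation argument inside $V$: if $q$ had an additional zero at some $t^\star \in \mathbb{I}\setminus T'$, then a small perturbation of $q$ within $V$ would produce a polynomial with at least $2K'+1$ sign changes on $\mathbb{I}$, contradicting the T-system bound. This completes the construction of the required $q_{T'}$.
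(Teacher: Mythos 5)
The paper itself does not prove Lemma~\ref{lem:uni}; it explicitly defers to Lemma~15 of \cite{eftekhari2021sparse} and Theorem~5.1 of \cite{karlin1966tchebycheff}, so there is no in-paper argument to compare against. Your reconstruction is the right classical route: restrict to the $(2K'+1)$-dimensional span $V$, place split simple zeros at $t_k\pm\eta$, use the fact that a nonzero element of $V$ attaining the maximal number $2K'$ of zeros must change sign at each of them, and pass to the limit $\eta\to 0$. The closing perturbation step is exactly the Karlin--Studden zero-counting principle (an interior non-nodal zero counts twice), and it does rule out an extra zero $t^\star$: otherwise $q$ would have $K'+1$ interior non-nodal zeros, for a count of $2K'+2>2K'$.

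There is, however, one step that is not justified as written. You assert that the coefficient vectors $b_\eta$ (normalized by $q_\eta(t_0)=1$) are bounded ``via non-degeneracy of the T-system evaluation matrix at $S_\eta$.'' Non-degeneracy at each fixed $\eta>0$ gives no uniform control as $\eta\to 0$: the $2K'\times(2K'+1)$ evaluation matrix at $S_\eta$ has rows that collapse pairwise onto the rows at $T'$, so the augmented $(2K'+1)\times(2K'+1)$ matrix (rows from $S_\eta\cup\{t_0\}$) becomes singular in the limit and its inverse blows up. The conclusion (boundedness) is true, but you need a different reason. Two easy repairs: (i) normalize $\|b_\eta\|=1$ instead of $q_\eta(t_0)=1$; then a convergent subsequence exists automatically, the limit $q$ is $\not\equiv 0$ by linear independence of the $\phi_m$, and the rest of your argument is unchanged, or (ii) prove boundedness by contradiction with the very zero-counting bound you use at the end: if $\|b_{\eta}\|\to\infty$ along a subsequence, the renormalized limit $\tilde q = \lim q_\eta/\|b_\eta\|$ is a nonzero element of $V$, nonnegative on $\mathbb{I}$, vanishing at $T'\cup\{t_0\}$; that is $K'+1$ interior non-nodal zeros, count $\geq 2K'+2 > 2K'$, impossible. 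Finally, note that your construction tacitly assumes $T'\subset\mathrm{interior}(\mathbb{I})$ so that $S_\eta\subset\mathbb{I}$ and the limiting zeros are interior (hence double-counted); this holds wherever the paper invokes the lemma, but is worth stating since the statement as given only asks $T'\subset\mathbb{I}$.
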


Consider any partition $\pi$ of $[K]$ into $d$ parts, $\Omega_1, ..., \Omega_d \subset [K]$, and we denote throughout of this paper \begin{equation}\label{eq:partition}
    T_{\Omega_1} = \{t^{(1)}_{k}\}_{k \in \Omega_1}, \cdots, T_{\Omega_d} = \{t^{(d)}_{k}\}_{k \in \Omega_d}.
\end{equation} By assumption $\{\psi^{(i)}_{m_i}\}_{m_i=1}^{M}$ forms a T-system for each $i = 1, ..., d$, so by Lemma \ref{lem:uni}, there exist polynomials $q_{T_{\Omega_1}}, \cdots, q_{T_{\Omega_d}}$ that are nonnegative on $\mathbb{I}$ and vanish only on $T_{\Omega_1}, \cdots, T_{\Omega_d}$ respectively. 

Let us form a polynomial 
$$Q(\theta) = Q(t^{(1)}, \cdots, t^{(d)}) = \sum_{\text{all partitions}\; \pi \;\text{of}\; [K]}q_{T_{\Omega_1}}(t^{(1)})\cdot ... \cdot q_{T_{\Omega_d}}(t^{(d)}),$$
where the sum is all over partitions of $[K]$. Obviously, $Q$ is a nonnegative on $\mathbb{I}^{d}$, because each summand is nonnegative. Now, let's verify $Q$ vanishes at $\Theta$. Take $\theta_{k} = (t^{(1)}_{k}, \cdots, t^{(d)}_{k})$, then since $k$ always belongs to one of partitions, each summand would be zero at $\theta_k$.  

When $\theta \in \Theta^{C}$, then consider 
\begin{enumerate}
    \item For $\theta = (t^{(1)}, ..., t^{(d)}) \in (T^{(1)}_{\epsilon})^{C} \times \cdots \times (T^{(d)}_{\epsilon})^{C} \subseteq \Theta^{C}_{\epsilon}$, it holds 
    $$q_{T_{\Omega_1}}(t^{(1)})\cdot ...\cdot q_{T_{\Omega_d}}(t^{(d)}) > 0$$
    for every partition, by summing up over all partitions, it immediately follows 
    $$Q(\theta) >  0$$
    \item For $\theta \in \Theta^{C}_{\epsilon} \setminus \Big((T^{(1)}_{\epsilon})^{C} \times \cdots \times (T^{(d)}_{\epsilon})^{C}\Big)$. We will consider two cases
    \begin{enumerate}
        \item Since the order of indexes doesn't matter, assume that there exist $k_1, ..., k_l \in [K]$, might all equal to each other with $l<d$, such that $\theta \in t^{(1)}_{k_1, \epsilon} \times \cdots \times t^{(l)}_{k_l, \epsilon} \times t^{(l+1)} \times \cdots t^{(d)}$. There exist a partition $\pi^*$ such that the first $l$ elements of it $\Omega^{*}_1, ..., \Omega^{*}_l$ doesn't contain any of these numbers $\{k_1, ..., k_l\}$, so for this partition 
        $$q_{T_{\Omega^{*}_1}}(t^{(1)})\cdot ...\cdot q_{T_{\Omega^{*}_d}}(t^{(d)})>0.$$
        Therefore, 
        $$Q(\theta) > 0$$
       \item The case when $l=d$, which means there exist $k_1, ..., k_d \in [K]$, not all equal to each other, such that $\theta \in t^{(1)}_{k_1, \epsilon} \times \cdots \times t^{(d)}_{k_d, \epsilon}$. Consider a partition $\pi^*$ such that $\Omega^*_{i}$ doesn't contain $k_{i}$, so 
       $$q_{T_{\Omega^{*}_1}}(t^{(1)})\cdot ...\cdot q_{T_{\Omega^{*}_d}}(t^{(d)})>0$$
       And, we have $$Q(\theta) > 0.$$
    \end{enumerate}
\end{enumerate}
In conclusion, $Q(\theta)$ is everywhere positive except it vanishes only on $\Theta$.

\section{Proof of Lemma \ref{lem:3}}\label{apx:C}
%For notational convenience, we write $\mu_{K, \epsilon}$ as $\mu$.
By feasibility of both $\hat{\mu}$ and $\mu_{K, \epsilon}$ for the Program, applying the triangle inequality, we get 
$$\Big\|\int_{\mathbb{I}^{d}} \Psi(\theta) h(\text{d}\theta)\Big\|_{F} \leq 2 \delta'$$
Now, using the existence of the dual certificate
\begin{align*}
    \bar{g}\int_{\Theta^{C}_{\epsilon}} h(\text{d}\theta) &\leq \int_{\Theta^{C}_{\epsilon}} G(\theta) h(\text{d}\theta) \;\; (\text{since}\; h \; \text{is nonnegative on} \; \Theta^{C}_{\epsilon}) \\
    &= \int_{\Theta^{C}_{\epsilon}} G(\theta) h(\text{d}\theta) + \sum_{k=1}^{K}\int_{\theta_{k, \epsilon}} G(\theta) h(\text{d}\theta) \\
    &= \int_{\mathbb{I}^{d}} G(\theta) h(\text{d}\theta)\\
    &\leq \sum_{m_1, \cdots , m_d = 1}^{M} b_{m_1, \cdots , m_d} \int_{\mathbb{I}^{d}}\psi^{(1)}_{m_1}(t^{(1)}) \cdots \psi^{(d)}_{m_d}(t^{(d)})h(\text{d}t^{(1)}, \cdots, \text{d}t^{(d)})\;\; \text{(by \eqref{eq:er_away})} \\
    &= \Big\langle b, \int_{\mathbb{I}^{d}}\Psi(\theta) h(\text{d}\theta) \Big\rangle\\
    &\leq \|b\|_{F} \Big\|\int_{\mathbb{I}^{d}} \Psi(\theta) h(\text{d}\theta)\Big\|_{F} \;\;(\text{Cauchy-Schwarz inequality}) \\
    &\leq 2 \|b\|_{F} \delta'
\end{align*}
where $b \in (\mathbb{R}^M)^{\bigotimes d}$.

\section{Proof of Lemma \ref{lem:4}}\label{apx:D}

The existence of the dual certificate $Q^{0}$ allows us to write that 
\begin{align*}
    \sum_{k=1}^{K} \Big|\int_{\theta_{k, \epsilon}} h(\text{d}\theta) \Big| &=\\
    &= \sum_{k=1}^{K} \int_{\theta_{k, \epsilon}} s_k h(\text{d}\theta) \;\;\;\;\Big(s_k:=sign\Big(\int_{\theta_{k, \epsilon}} h(\text{d}\theta)\Big)\Big) \\
    &=\sum_{k=1}^{K} \int_{\theta_{k, \epsilon}} (s_k - Q^{0}(\theta)) h(\text{d}\theta) + \sum_{k=1}^{K} \int_{\theta_{k, \epsilon}} Q^{0}(\theta) h(\text{d}\theta)\\
    &=\sum_{k=1}^{K} \int_{\theta_{k, \epsilon}} (s_k - Q^{0}(\theta)) h(\text{d}\theta) + \int_{\mathbb{I}^{d}} Q^{0}(\theta) h(\text{d}\theta) - \int_{\Theta^{C}_{\epsilon}} Q^{0}(\theta) h(\text{d}\theta)\\
    &=\sum_{s_k=1} \int_{\theta_{k, \epsilon}} (1 - Q^{0}(\theta)) h(\text{d}\theta)+ \sum_{s_k=-1} \int_{\theta_{k, \epsilon}} (-1 - Q^{0}(\theta)) h(\text{d}\theta) \\
    &\;\;\;\;\;\;\;\;\;\;+\int_{\mathbb{I}^{d}} Q^{0}(\theta) h(\text{d}\theta) - \int_{\Theta^{C}_{\epsilon}} Q^{0}(\theta) h(\text{d}\theta) \\
    &= \sum_{s_k=1} \int_{\theta_{k, \epsilon}} (1 - Q^{0}(\theta)) h(\text{d}\theta)+ \sum_{s_k=-1} \int_{\theta_{k, \epsilon} \setminus \theta_{k}} (-1 - Q^{0}(\theta))h(\text{d}\theta) \\
    &+ \sum_{s_k=-1} \int_{\theta_{k}} (-1 + \alpha - Q^{0}(\theta)) h(\text{d}\theta) - \alpha \sum_{s_k=-1} \int_{\theta_{k}} h(\text{d}\theta)
     + \int_{\mathbb{I}^{d}} Q^{0}(\theta) h(\text{d}\theta) - \int_{\Theta^{C}_{\epsilon}} Q^{0}(\theta) h(\text{d}\theta)\\
     &\leq - \alpha \sum_{s_k=-1} \int_{\theta_{k}} h(\text{d}\theta)
     + \int_{\mathbb{I}^{d}} Q^{0}(\theta) h(\text{d}\theta) + \int_{\Theta^{C}_{\epsilon}}  h(\text{d}\theta) \;\;\;\; (\text{by \eqref{eq:0.2} })\\
     &\leq - \alpha \sum_{s_k=-1} \int_{\theta_{k}} h(\text{d}\theta)
     + \int_{\mathbb{I}^{d}} Q^{0}(\theta) h(\text{d}\theta) + 2 \|b\|_{F}  \delta'/\bar{g}  \;\;\;\;(\text{by previous lemma}) \\
     &\leq \alpha \sum_{s_k=-1} \int_{\theta_{k}} \mu_{K, \epsilon}(\text{d}\theta)
     + \int_{\mathbb{I}^{d}} Q^{0}(\theta) h(\text{d}\theta) + 2 \|b\|_{F}  \delta'/\bar{g} \;\;\;\;(\text{since}\; h: =\hat{\mu} - \mu_{K, \epsilon}\; \text{and}\; \hat{\mu} \geq 0) \\
     &=\alpha \sum_{s_k=-1} \sum_{k=1}^{K}\int_{\theta_{k}} a_{k}(\text{d}\theta)
     + \int_{\mathbb{I}^{d}} Q^{0}(\theta) h(\text{d}\theta) + 2 \|b\|_{F}  \delta'/\bar{g} \;\;\;\;(\text{since}\; \mu_{K, \epsilon} = \sum_{k=1}^{K} a_{k} \delta_{\theta_{k}})\\
     &\leq \alpha \|\mu_{K, \epsilon}\|_{TV}
     + \int_{\mathbb{I}^{d}} Q^{0}(\theta) h(\text{d}\theta) + 2 \|b\|_{F}  \delta'/\bar{g} \\
     &=\alpha \|\mu_{K, \epsilon}\|_{TV}
     + \Big\langle b^{0}, \int_{\mathbb{I}^{d}}\Psi(\theta) h(\text{d}\theta) \Big\rangle + 2 \|b\|_{F}  \delta'/\bar{g}\\
     &\leq \alpha \|\mu_{K, \epsilon}\|_{TV}
     + \| b^{0}\|_{F} \cdot 2\delta' + 2 \|b\|_{F}  \delta'/\bar{g} \;\;\;\;(\text{by Cauchy-Schwarz and ineq. from prev. lemma})
\end{align*}
Above the tensor $b^{0} \in (\mathbb{R}^M)^{\bigotimes d}$ formed by coefficients of dual certificate $Q^{0}$.

%\section{\\Proof of Proposition \ref{prop:0}} \label{apx:D_E}

\section{Proof of Proposition \ref{prop:1}}\label{apx:E}

In the proof of  proposition we will use the lemma about $T^*$-systems; see~\cite{eftekhari2021sparse}, Proposition 19 for proof details.  
\begin{lemma}[Univariate polynomial of a $T^*$-system] \label{lem:5}
Consider a finite set $T' \subset \mathbb{I}$ of size no longer than $K$. For $M \geq 2K+2$, suppose that $\{\phi_{m}\}_{m=1}^{M}$ form a T-system on $\mathbb{I}$. Consider also $F': \mathbb{R} \rightarrow \mathbb{R}$ and suppose that $\{F'\} \cup \{\phi_{m}\}_{m=1}^{M}$ form a $T^*_{K, \epsilon}$-system on $\mathbb{I}$. Then there exist real coefficients $\{b_{m}\}_{m=1}^{M}$ and a continuous polynomial $q_{T'} = \sum_{m=1}^{M}b_{m} \phi_{m}$ such that $q_{T'} \geq F'$ with equality holding on $T'$. 
\end{lemma}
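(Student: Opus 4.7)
The plan is to construct $q_{T'}$ as a limit of univariate interpolants from the T-system, where the interpolation nodes are taken along a carefully designed $(K,\epsilon)$-admissible sequence, and then to use the two clauses of Definition \ref{def:T*_system} to control the sign of $q_{T'}-F'$ and the convergence of the coefficients. The idea is that a pair of nodes collapsing onto a point $t_i' \in T'$ plays the role of a double root of $q_{T'}-F'$ there, forcing equality on $T'$ and the correct one-sided sign nearby, while one remaining free node slides to any location $\sigma \in \mathbb{I} \setminus T'$ and is the point at which the $T^*$-system determinant delivers the inequality $q_{T'}(\sigma) \geq F'(\sigma)$.

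By the first bulleted remark after Definition \ref{def:T*_system}, I may assume $|T'|=K$. Write $T'=\{t_1'<\cdots<t_K'\}$, fix an arbitrary $\sigma\in\mathbb{I}\setminus T'$, and for each $n$ choose ordered nodes $\tau_0^n=0<\tau_1^n<\cdots<\tau_{M-1}^n<\tau_M^n=1$ whose interior decomposes into $K$ pairs converging respectively to $t_1',\ldots,t_K'$ plus one singleton $\tau_l^n\to\sigma$ (if $M>2K+2$, add further pairs at auxiliary points to maintain even multiplicities away from $\sigma$). By Definition \ref{def:admissible_sequence}, this is a $(K,\epsilon)$-admissible sequence with $l$ marking the single multiplicity. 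The T-system hypothesis on $\{\phi_m\}_{m=1}^M$ then produces, for each $n$, a unique polynomial $q_n=\sum_m b_m^n\phi_m$ matching $F'$ at all $M$ nodes $k\ne l$.

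Expanding the full $(M+1)\times(M+1)$ determinant $\det[F'(\tau_k^n),\phi_1(\tau_k^n),\ldots,\phi_M(\tau_k^n)]_{k=0}^M$ along row $l$ and applying Cramer's rule, $q_n(\tau_l^n)-F'(\tau_l^n)$ equals this determinant divided by the $M\times M$ minor obtained by deleting row $l$ and the $F'$-column. The denominator is a T-system determinant, and the numerator is positive for large $n$ by the first clause of Definition \ref{def:T*_system}; these can be made to have compatible signs by a row permutation and Definition \ref{def:T_system}, so $q_n(\tau_l^n)\geq F'(\tau_l^n)$ along the sequence. Each coefficient $b_m^n$ is itself a ratio of row-$l$ minors of the same matrix, and the second clause of Definition \ref{def:T*_system} --- that all such minors vanish at a common rate --- is precisely what forces these ratios to converge to finite limits $b_m$. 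Set $q_{T'}:=\sum_m b_m\phi_m$.

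Finally, one verifies the two required properties and addresses the main subtlety. Equality on $T'$ follows because the interpolation conditions $q_n(\tau_{2i-1}^n)=q_n(\tau_{2i}^n)=F'(t_i')$ (valid once the paired nodes lie in a neighborhood on which $F'$ takes the constant value $F'(t_i')$, as is the case for the functions $F_{T_{\Omega_i}}$, $F^{\pm}_{t'}$, $F^{+}_{t'}$ appearing in Theorem \ref{theo:2}) pass to the limit by continuity of the $\phi_m$'s, giving $q_{T'}(t_i')=F'(t_i')$. Passing $q_n(\tau_l^n)\geq F'(\tau_l^n)$ to the limit yields $q_{T'}(\sigma)\geq F'(\sigma)$, and since $\sigma$ was arbitrary in $\mathbb{I}\setminus T'$, this gives $q_{T'}\geq F'$ on all of $\mathbb{I}$. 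The main obstacle is that a priori the coefficient vector $(b_m^n)$ depends on the sliding parameter $\sigma$, so one must argue that the limit polynomial $q_{T'}$ is independent of $\sigma$. The resolution, standard in $T$-system theory, is that the collapse of each pair $(\tau_{2i-1}^n,\tau_{2i}^n)$ forces $q_{T'}-F'$ to have a zero of multiplicity two at $t_i'$ (so $q_{T'}-F'$ does not change sign near $t_i'$); these $2K$ conditions together with positivity of $q_{T'}-F'$ at a single further point pin down $q_{T'}$ uniquely. The $T^*$-system definition is engineered precisely so that this limiting coherent polynomial exists and satisfies both desired properties.
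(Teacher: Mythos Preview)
The paper does not supply its own proof of this lemma; it simply cites \cite{eftekhari2021sparse}, Proposition 19. Your sketch follows precisely the architecture of that cited argument (interpolate $F'$ at the $M$ nodes obtained by deleting the singleton row, expand the $(M+1)\times(M+1)$ determinant along that row to read off the sign of $q_n-F'$, and use the common-rate vanishing of the row-$l$ minors to pass to a limit), so at the level of strategy there is nothing to compare.

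Two issues are worth flagging. First, the sequence you build is not $(K,\epsilon)$-admissible in the sense of Definition~\ref{def:admissible_sequence} as stated in \emph{this} paper: with $|T'|=K$ and a free singleton $\sigma\notin T'$, the limiting multiset has $K+1$ distinct values, whereas the definition insists on at most $K$; moreover for $\sigma$ within $\epsilon$ of $T'$ or of $\{0,1\}$ the $\epsilon$-separation clause fails as well. This tension is really between the present paper's transcription of the definition and the construction in \cite{eftekhari2021sparse}; your argument matches the source, not the restated definition here.

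Second, your closing paragraph manufactures and then incorrectly resolves a non-issue. In the minimal case $M=2K+2$, the interpolant $q_n$ is uniquely determined by its values at the $M$ nodes $\{0,1\}\cup\{\text{the $K$ pairs}\}$, and these nodes do not involve $\sigma$ at all; the singleton is used only to certify the inequality at that one location. Hence there is no $\sigma$-dependence to remove. The ``$2K$ double-zero conditions plus one sign condition pin down $q_{T'}$'' argument you offer would not work anyway, since $q_{T'}$ lives in an $M\geq 2K+2$ dimensional space and those constraints underdetermine it.
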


Consider any partition $\pi = \{\Omega_1, ..., \Omega_d\}$ of $[K]$ and define $T_{\Omega_i}$ in the sense of \eqref{eq:partition}. By assumption, $\{F_{T_{\Omega_i}}\} \cup \{\psi^{(i)}_{m_i}\}_{m_i=1}^{M}$ form a $T^{*}_{K, \epsilon}$-system on $\mathbb{I}$ for each $i \in [d]$. Therefore, by above lemma, there exist polynomials $q_{T_{\Omega_i}}$ for each $i \in [d]$ such that 
$$q_{T_{\Omega_i}} \geq F_{T_{\Omega_i}}$$
with equality holding on $T_{\Omega_i}$. 

Consider the polynomial 
\begin{equation*}
    Q(\theta) = Q(t^{(1)}, \cdots, t^{(d)}) = \sum_{all\;partitions\; \pi \;of [K]} q_{T_{\Omega_1}}(t^{(1)})\cdots q_{T_{\Omega_d}}(t^{(d)})
\end{equation*}
where the sum is over all partitions of $[K]$. We next show that $Q$ is the desired dual certificate. 

For every $\theta \in \theta_{k, \epsilon}$, it holds that 
\begin{align*}
    Q(\theta) = \sum_{all\;partitions\; \pi \;of [K]} q_{T_{\Omega_1}}(t^{(1)})\cdots q_{T_{\Omega_d}}(t^{(d)}) \geq \sum_{all\;partitions\; \pi \;of [K]} F_{T_{\Omega_1}}(t^{(1)})\cdots F_{T_{\Omega_d}}(t^{(d)}) = 0
\end{align*}
because we know for sure that every partition has a set $\Omega_j$ which contains $k$, that means $F_{T_{\Omega_j}}(t^{(j)}) = 0$. And, equality achieves exactly at $\Theta$.

On the other hand, consider $\theta \in \Theta^{C}_{\epsilon}$. 
\begin{enumerate}
    \item For $\theta = (t^{(1)}, ..., t^{(d)}) \in (T^{(1)}_{\epsilon})^{C} \times \cdots \times (T^{(d)}_{\epsilon})^{C} \subseteq \Theta^{C}_{\epsilon}$, it holds 
    $$q_{T_{\Omega_1}}(t^{(1)})\cdots q_{T_{\Omega_d}}(t^{(d)}) \geq 1$$
    for every partition, by summing up over all partitions, it immediately follows 
    $$Q(\theta) \geq d^{K}$$
    \item For $\theta \in \Theta^{C}_{\epsilon} \setminus \Big((T^{(1)}_{\epsilon})^{C} \times \cdots \times (T^{(d)}_{\epsilon})^{C}\Big)$. We will consider two cases
    \begin{enumerate}
        \item Since the order of indexes doesn't matter, assume that there exist $k_1, ..., k_l \in [K]$, might all equal to each other with $l<d$, such that $\theta \in t^{(1)}_{k_1, \epsilon} \times \cdots \times t^{(l)}_{k_l, \epsilon} \times t^{(l+1)} \times \cdots t^{(d)}$. Suppose among these $\{k_1, ..., k_l\}$ indexes, there are only $p$ different numbers, and WLOG, we can take
        \begin{align*}
            &k_1 = \cdots = k_{a_1} \\
            &k_{a_1+1} = \cdots = k_{a_1+a_2} \\
            &\;\;\;\;\;\vdots \\
            &k_{a_1+\cdots + a_{p-1}+1} = \cdots = k_{a_1+\cdots + a_p}
        \end{align*}
        with $a_1+\cdots+ a_p = l$, where each $a_i \geq 1$. Using this notation, we can write 
        \begin{align*}
            Q(\theta) = &\sum_{\pi} q_{T_{\Omega_1}}(t^{(1)})\cdot ... \cdot q_{T_{\Omega_{a_1}}}(t^{(a_1)}) \cdot q_{T_{\Omega_{a_1+1}}}(t^{(a_1+1)}) \cdot ... \cdot q_{T_{\Omega_{a_1+a_2}}}(t^{(a_1+a_2)})\cdot  \\
            &... \cdot q_{T_{\Omega_{a_1+\cdots+a_{p-1}+1}}}(t^{(a_1+\cdots+a_{p-1}+1)}) \cdot ... \cdot q_{T_{\Omega_{a_1+\cdots+a_{p}}}}(t^{(a_1+\cdots+a_{p})}) \cdot q_{T_{\Omega_{l+1}}}(t^{(l+1)}) \cdot ... \cdot q_{T_{\Omega_d}}(t^{(d)})
        \end{align*}
        This polynomial is not zero only when the first $a_1$ components of $\theta$ are not in $\Omega_1, ..., \Omega_{a_1}$, and the second $a_2$ components are not in $\Omega_{a_1+1}, ..., \Omega_{a_1+a_2}$, and so on, then all the components starting from $t^{(l+1)}$ can be in any place, therefore 
        $$Q(\theta) \geq (d-a_1)\cdot \cdots \cdot(d-a_p) d^{K-p}$$
        with $a_1+...+a_p = l$, $l\geq 1, \; a_i \geq 1, \; p\geq 1$. 
        
        Using Lemma \ref{lem:6}, we know that for each $1\leq l \leq d-1$, we have $Q(\theta) \geq (d-l)d^{K-1}$, and it achieves its minimum when $l=d-1$, so 
        $$Q(\theta) \geq d^{K-1}$$
        \item The case when $l=d$, which means there exist $k_1, ..., k_d \in [K]$, not all equal to each other, such that $\theta \in t^{(1)}_{k_1, \epsilon} \times \cdots \times t^{(d)}_{k_d, \epsilon}$. Repeating the same steps as above, we obtain a minimization problem 
        $$Q(\theta) \geq (d-a_1)\cdot \cdots \cdot (d-a_p)d^{K-p}$$
        with $a_1+\cdots+a_p = d$, where $a_i\geq 1$ and $p\geq 2$. \\
         When $p=2$, consider a function $P(x) = (d-x)(d-(d-x)) = (d-x)x$, which is concave down, and achieves it's minimum at endpoints $x = 1$, $P(1) = d-1$, so $Q(\theta) \geq (d-1)d^{K-2}$\\
    So for $l=d$, using Lemma \ref{lem:7} 
    $$Q(\theta) \geq (d-1)d^{K-2}$$
    \end{enumerate} 
 Comparing the lower bounds in all above cases, we see that 
    $$Q(\theta) \geq (d-1)d^{K-2}$$
    for all $d$.     
\end{enumerate}
\begin{lemma}\label{lem:6}
    For a fixed $l$, with $l < d$, the following 
    $$(d-l) d^{p-1} \leq (d-a_1)\cdot ... \cdot (d-a_p)$$
    is true for any partition of $l$, i.e., $a_1+\cdots+a_p = l$, $l\geq 1, \; p\geq 1, \; a_i \geq 1$. 
\end{lemma}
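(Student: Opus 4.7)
The plan is to prove the inequality by induction on the number of parts $p$, with $d$ and $l$ held fixed under the hypothesis $1 \leq l < d$ and each $a_i \geq 1$. The base case $p = 1$ is immediate: the only admissible partition is $a_1 = l$, and both sides of the inequality equal $d-l$, so equality holds.

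For the inductive step, suppose the claim holds for $p-1$, and let $a_1,\ldots,a_p$ be any partition of $l$ into $p \geq 2$ positive parts. Set $a' := a_1 + a_2$, so that $(a', a_3, \ldots, a_p)$ is a partition of $l$ into $p-1$ positive parts, and the induction hypothesis gives
$$(d-a')(d-a_3)\cdots(d-a_p) \;\geq\; (d-l)\,d^{p-2}.$$
The key two-variable inequality needed to close the induction is the elementary identity
$$(d-a_1)(d-a_2) \;=\; \bigl(d - (a_1+a_2)\bigr)\,d + a_1 a_2 \;=\; (d-a')\,d + a_1 a_2 \;\geq\; (d-a')\,d,$$
which follows from $a_1 a_2 \geq 1 > 0$. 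Combining the two displays,
$$\prod_{i=1}^{p}(d-a_i) \;\geq\; (d-a')\,d\,\prod_{i=3}^{p}(d-a_i) \;=\; d\cdot\bigl[(d-a')(d-a_3)\cdots(d-a_p)\bigr] \;\geq\; d\cdot (d-l)\,d^{p-2} \;=\; (d-l)\,d^{p-1},$$
which is exactly the desired bound.

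The only subsidiary bookkeeping is to confirm that every factor in the chain is strictly positive, so that the multiplicative inequalities are legitimate. Since $a_i \geq 1$ and $\sum_i a_i = l < d$, each individual part satisfies $a_i \leq l - (p-1) < d$, and similarly $a' = a_1 + a_2 \leq l < d$, so every factor $d - a_i$ and $d - a'$ is at least $1$. Honestly there is no real obstacle here: the lemma is a one-line induction powered by the identity $(d-a_1)(d-a_2) - (d-a_1-a_2)d = a_1 a_2$, and one sees from the slack $a_1 a_2 \cdot d^{p-2}$ that the bound $(d-l)d^{p-1}$ is not tight for $p \geq 2$, which is consistent with it being the quantity actually used in the proof of Proposition \ref{prop:1}.
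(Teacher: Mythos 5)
Your proof is correct, but it takes a genuinely different route from the paper's. The paper proves Lemma~\ref{lem:6} by induction on $l$ (the number being partitioned), splitting the inductive step into two cases according to whether the partition of $n$ contains a part equal to $1$: if so it peels off a $1$ and appeals to the hypothesis for a partition of $n-1$ into $q-1$ parts; if not, it shrinks $b_1$ to $b_1-1$ and appeals to the hypothesis for a partition of $n-1$ into $q$ parts, then absorbs the extra term via $(d-b_2)\cdots(d-b_q)\leq d^{q-1}$. You instead induct on $p$ (the number of parts), merging $a_1$ and $a_2$ into $a'=a_1+a_2$ and invoking the identity $(d-a_1)(d-a_2)=(d-a')d+a_1a_2\geq (d-a')d$. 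Your inductive step is a single unconditional inequality with no case split, and it is exactly the merging bound $(d-a)(d-b)\geq d\bigl(d-(a+b)\bigr)$ that the paper itself uses in the proof of Lemma~\ref{lem:7}; so your argument in effect unifies Lemmas~\ref{lem:6} and~\ref{lem:7} around one elementary identity, which is arguably the more natural way to organize the two. Your positivity check (each $d-a_i\geq 1$ and $d-a'\geq 1$ because $l<d$) is exactly the bookkeeping needed to make the chain of multiplications legitimate, and it is correct.
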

\begin{proof}
We proceed by induction on $l$. The base case is trivial, $l=1$, we have $p=1$, so $(d-1) \leq (d-1)$. Suppose we proved for $l=n-1$ for some $n$, so we have $$(d-n+1)d^{p-1} \leq (d-a_1)\cdot ... \cdot (d-a_p)$$ for any partition of $n-1$, i.e., $a_1+\cdots+a_p = n-1, \; p\geq 1, \; a_i\geq 1$. \\
Now, let's prove for $l=n$ case, and consider any partition of $n$, say $b_1+\cdots+b_q = n$, then $\{b_1, ..., b_q\}$ belongs to one of these forms 
\begin{itemize}
    \item If there exists $1 \in \{b_1, ..., b_q\}$, then WLOG, $b_1 + \cdots + b_{q-1} =n-1$ is a partition of $n-1$, so 
    $$(d-n)d^{q-1} \leq (d-n+1)(d-1)d^{q-2} \leq (d-b_1)\cdot... \cdot (d-b_{q-1}) (d-1) = (d-b_1)\cdot ... \cdot (d-b_q)$$
    \item If $1 \notin \{b_1, ..., b_q\}$, then $(b_1 -1) + \cdots + b_q = n-1$ is a partition of $n-1$
    \begin{align*}
    (d-n)d^{q-1} &= (d-n+1)d^{q-1}-d^{q-1} \leq (d-b_1+1)\cdot ... \cdot (d-b_q) - d^{q-1} = \\
    &= (d-b_1)\cdot ... \cdot (d-b_q)+ (d-b_2)\cdot ... \cdot (d-b_q)- d^{q-1} \leq (d-b_1)\cdot ... \cdot (d-b_q)
    \end{align*}
        due to the last difference is negative. 
\end{itemize}
This finishes our proof.
\end{proof}
\begin{lemma}\label{lem:7}
For any partition of $d$ such that $a_1+\cdots +a_p =d$, where $p \geq 3$ and $a_i \geq 1$, the following is satisfied $$(d-a_1)\cdot ... \cdot (d-a_p) d^{K-p}\geq (d-1)d^{K-2}$$
\end{lemma}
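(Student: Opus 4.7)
The plan is to prove the inequality by induction on $p$ after canceling the common factor $d^{K-p}$ from both sides, so that it suffices to show
\[
(d-a_1)\cdots(d-a_p) \geq (d-1) d^{p-2}
\]
whenever $a_1 + \cdots + a_p = d$, $a_i \geq 1$, and $p \geq 3$. The workhorse is the pairwise identity
\[
(d-a_i)(d-a_j) = d(d - a_i - a_j) + a_i a_j \geq d(d - a_i - a_j),
\]
which lets one ``merge'' two factors into a single factor $(d - a_i - a_j)$ at the cost of an extra $d$ on the right. Since each $a_i \geq 1$, the discarded term $a_i a_j$ is nonnegative (in fact positive).

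For the base case $p = 3$, merging $a_2$ and $a_3$ (with $a_2 + a_3 = d - a_1$) gives
\[
(d-a_1)(d-a_2)(d-a_3) \geq d \cdot a_1 (d-a_1).
\]
The function $x \mapsto x(d-x)$ is concave on $[1, d-2]$, so its minimum on that interval is attained at an endpoint and equals $\min\{d-1,\, 2(d-2)\} = d-1$ (using $d \geq p = 3$). This yields $d \cdot a_1(d-a_1) \geq d(d-1)$, settling the base case.

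For the inductive step from $p$ to $p+1$ with $p \geq 3$, I will merge the last two variables by setting $b_p := a_p + a_{p+1} \geq 2$. Then $(a_1, \ldots, a_{p-1}, b_p)$ is a composition of $d$ into $p$ positive parts, still satisfying the hypothesis $p \geq 3$, so the inductive hypothesis applies and gives $(d-a_1)\cdots(d-a_{p-1})(d-b_p) \geq (d-1) d^{p-2}$. The merging identity contributes $(d-a_p)(d-a_{p+1}) \geq d(d-b_p)$, so the full product is at least $(d-1) d^{p-1}$, completing the induction.

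No serious obstacle is expected: the algebra is elementary, the concavity bound is one-variable calculus, and the inductive step never drops below $p = 3$ by construction, so no additional base cases are required. Re-inserting the factor $d^{K-p-1}$ at the end recovers the claim in its stated form.
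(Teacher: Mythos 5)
Your proof is correct and takes essentially the same approach as the paper: both rest on the same merging identity $(d-a)(d-b) \ge d\,(d-(a+b))$ for $a,b \ge 1$, reduce to a single factor of the form $a(d-a)$, and finish with the concavity bound $\min_{x\in[1,d-2]} x(d-x) = d-1$. The only cosmetic difference is that you package the argument as an explicit induction on $p$ with base case $p=3$, whereas the paper chains the merging inequality directly down to a single factor and cites its earlier $p=2$ discussion.
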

\begin{proof}
The above inequality is equivalent to 
$$(d-a_1)\cdot ... \cdot (d-a_p) \geq (d-1)d^{p-2}$$
Let's notice that for any positive $a, b \geq 1$, we have $(d-a)(d-b) \geq d(d-(a+b))$, and applying it many times, we get 
\begin{align*}
    (d-a_1)\cdot ... \cdot (d-a_p) &\geq d (d-(a_1+a_2))(d-a_3)\cdot ... \cdot (d-a_p) \geq \\
     &... \geq d^{p-2}(d-(a_1+\cdots+a_{p-1}))(d-a_p) = d^{p-2}a_{p}(d-a_p) \geq (d-1)d^{p-2}
\end{align*}
    The last inequality came from $p=2$ case. 
\end{proof}
\section{Proof of Proposition \ref{prop:2}}\label{apx:F}

The proof is based on Lemma \ref{lem:5}, let us now fix an arbitrary sign pattern $\pi \in \{\pm1\}_{k=1}^{K}$. For every $k \in [K]$, by assumption $\{F^{+}_{t^{(d)}_{k}}\} \cup \{\psi^{(i)}_{m_i}\}_{m_i=1}^{M}$ form a $T^*$-system, therefore, there exists a polynomial $q^{\pi_k}_{t^{(d)}_{k}}$ such that 
\begin{align*}
    q^{\pi_k}_{t^{(d)}_{k}} \geq \begin{cases}
    F^{+}_{t^{(d)}_{k}} \;\text{when}\; \pi_k = 1 \\
    \epsilon^{d-1}F^{+}_{t^{(d)}_{k}}\;\text{when}\; \pi_k = -1
    \end{cases}
\end{align*}
for every $t^{(d)}_{k} \in \mathbb{I}$, with equality holding on $T^{(d)} = \{t^{(d)}_{k}\}_{k=1}^{K}$. When the sign pattern $\pi$ contains at least one negative
sign, we define for future use the normalized maximum
$$q^{\pi}_{\max}(\epsilon): = \epsilon^{-(d-1)}\max_{\pi_k = -1} \max_{t^{(d)} \in \mathbb{I}}q^{\pi_k}_{t^{(d)}_{k}}$$
where the inner maximum above is indeed achieved in view of the continuity of $q^{\pi_k}_{t^{(d)}_{k}}$ and the compactness of $\mathbb{I}$. Note that $q^{\pi}_{\max}(\epsilon) \geq 1$ for every $\epsilon > 0$. When $\epsilon = 0$, we choose the trivial polynomial $q^{\pi_k}_{t^{(d)}_{k}} = \epsilon = 0$ for every $k$ such that $\pi_k = -1$, and thus let's take it as $$q^{\pi}_{\max}(0) = 1.$$
Likewise, for every $k \in [K]$, $\{F^{\pm}_{t^{(j)}_{k}}\} \cup \{\psi^{(j)}_{m_j}\}_{m_j=1}^{M}$ form a $T^*$-system on $\mathbb{I}$ by assumption. Therefore, for every $k\in [K]$ there exists a polynomial $q^{\pi_{k}}_{t^{(j)}_{k}}$ such that 
\begin{align*}
    q^{\pi_{k}}_{t^{(j)}_{k}} \geq \begin{cases}
    F^{\pi_k}_{t^{(j)}_{k}}\;&\text{when}\; \pi_k = 1 \\
    \frac{F^{\pi_k}_{t^{(j)}_{k}}}{\epsilon q^{\pi}_{\max}(\epsilon)}\;&\text{when}\; \pi_k = -1
    \end{cases}
\end{align*}
for every $t^{(j)}_{k}$ with equality holding on $\{t^{(j)}_{k}\}_{k=1}^{K}$. Now, we observe that 
\begin{align*}
    q^{\pi_{k}}_{t^{(1)}_{k}} \cdot ... \cdot q^{\pi_{k}}_{t^{(d-1)}_{k}} \cdot q^{\pi_{k}}_{t^{(d)}_{k}} \geq \begin{cases}
    \pi_k \;&\text{when}\; \theta \in \theta_{k, \epsilon}\\
    \frac{(-1)^{d-1}}{q^{\pi}_{\max}(\epsilon)}\;&\text{when}\; \theta = \theta_{k}\; \text{and}\;\pi_k = -1 \\
    -1\;&\text{elsewhere in}\; \mathbb{I}^{d}
    \end{cases}
\end{align*}
with equality holding at least on $\Theta$. Let us now consider the polynomial
\begin{align*}
    Q^{\pi}(\theta) := \sum_{k \in [K]} q^{\pi_k}_{t^{(1)}_{k}}(t^{(1)})\cdot ... \cdot q^{\pi_k}_{t^{(d-1)}_{k}}(t^{(d-1)}) q_{t^{(d)}_{k}}(t^{(d)})
\end{align*}
We establish that for the appropriate choice of the sign pattern $\pi$, $Q^{\pi}$ is indeed the desired dual certificate prescribed in Lemma \ref{lem:4}.More specifically, let $\pi^*$ denote the sign pattern specified by the error measure $h$ in (\ref{eq:0.2}), then the dual certificate exists with 
$$\alpha(\epsilon) = 1 + \frac{(-1)^{d-1}}{q^{\pi^*}_{\max}(\epsilon)}$$
This completes the proof.

\section{Proof of Theorem \ref{theo:2}}
Using the $L$-Lipschitz property of the imaging apparatus and using the triangle inequality, we can write
\begin{align*}\label{eq:mis}
    \Big\|y - \int_{\mathbb{I}^{d}} \Psi(\theta) \mu_{K, \epsilon}(\text{d}\theta) \Big\|_{F} &\leq 
    \Big\|y - \int_{\mathbb{I}^{d}} \Psi(\theta) \mu(\text{d}\theta) \Big\|_{F} + \Big\| \int_{\mathbb{I}^{d}} \Psi(\theta) (\mu (\text{d}\theta)-  \mu_{K, \epsilon}(\text{d}\theta)) \Big\|_{F} \\ 
    &\leq \delta + L \cdot d_{GW}(\mu, \mu_{K, \epsilon}) \\
    &= \delta + L \cdot R(\mu, K, \epsilon) := \delta'
\end{align*}
We can think that $\hat{\mu}$ is a solution of rogram (\ref{eq::5}) with $\delta'$ as an estimation of $\mu_{K, \epsilon}$. And since we constructed appropriate dual certificates in Propositions \ref{prop:1} and \ref{prop:2}, we can use the results to bound the the generalized Wasserstein metric error between the true measure and the solution of program (\ref{eq::5}). 
\begin{align*}
    d_{GW}(\mu, \hat{\mu}) &\leq d_{GW}(\mu, \mu_{K, \epsilon}) + d_{GW}(\mu_{K, \epsilon}, \hat{\mu}) \;\;(\text{triangle inequality})\\
    &\leq R(\mu, K, \epsilon) + \Big(\frac{8\|b\|_{F}}{\bar{g}} + 6\|b^0\|_{F} \Big)\delta' + (\epsilon + 3\alpha(\epsilon))\|\mu_{K, \epsilon}\|_{TV}\;\;\text{(Proposition \ref{prop:0})} \\
    &=  R(\mu, K, \epsilon) + \Big(\frac{8\|b\|_{F}}{\bar{g}} + 6\|b^0\|_{F} \Big)(\delta + L \cdot R(\mu, K, \epsilon)) + (\epsilon + 3\alpha(\epsilon))\|\mu_{K, \epsilon}\|_{TV}\\
    &= \Big(\frac{8\|b\|_{F}}{\bar{g}} + 6\|b^0\|_{F} \Big)\delta + \Big(\frac{8L\|b\|_{F}}{\bar{g}} + 6L\|b^0\|_{F} + 1 \Big)R(\mu, K, \epsilon)+ (\epsilon + 3\alpha(\epsilon))\|\mu_{K, \epsilon}\|_{TV}\\
    &= \Big(\frac{8}{d-1}d^{2-K} \|b\|_{F}+ 6\|b^0\|_{F} \Big)\delta + \Big(\frac{8L}{d-1}d^{2-K} \|b\|_{F} + 6L\|b^0\|_{F} + 1 \Big)R(\mu, K, \epsilon) \\&\;\;\;\;+ (\epsilon  +3\alpha(\epsilon))\|\mu_{K, \epsilon}\|_{TV}
\end{align*}
Next, we relate $\|\mu_{K, \epsilon}\|_{TV}$ to $\|\mu\|_{TV}$. So, 
\begin{align*}
    \|\mu_{K, \epsilon}\|_{TV} &= d_{GW}(\mu_{K \epsilon}, 0)\\
    &\leq d_{GW}(\mu_{K, \epsilon}, \mu) + d_{GW}(\mu, 0)\;\;\text{triangle inequality}\\
    &=  d_{GW}(\mu_{K, \epsilon}, \mu) + \|\mu\|_{TV} \\
    &\leq d_{GW}(\mu, 0) + \|\mu\|_{TV} \\
    &= 2\|\mu\|_{TV}
\end{align*}

So, combining the two above results, we get 
\begin{align*}
    c_1 &= \frac{8}{d-1}d^{2-K} \|b\|_{F}+ 6\|b^0\|_{F}\\
    c_2(\epsilon) &= (2\epsilon + 6\alpha(\epsilon))\|\mu\|_{TV}\\
    c_3 &=\frac{8L}{d-1}d^{2-K} \|b\|_{F} + 6L\|b^0\|_{F} + 1
\end{align*}

\end{document}